\documentclass[onefignum,onetabnum]{siamart190516}
\usepackage[T1]{fontenc}
\usepackage[english]{babel}
\usepackage{amsmath}
\usepackage{amssymb}
\usepackage{amsfonts}
\usepackage{array}
\usepackage{graphicx}
\usepackage{epsfig}
\usepackage{float}
\usepackage{color}
\usepackage{enumitem}  
\usepackage{epstopdf}
\usepackage{stmaryrd}
\usepackage{standalone}
\usepackage{tikz, subfigure}

\usepackage{todonotes}

\usetikzlibrary{shapes, patterns, arrows}
\usetikzlibrary{calc}
\usepackage{pgfplots}
\usepgfplotslibrary{fillbetween}
\pgfplotsset{compat=newest}
\usepackage{pgfplotstable}
\usepgfplotslibrary{groupplots}
\usepackage{diagbox}

\usepackage{booktabs}  
\usepackage{capt-of}
\usepackage{multirow}
\usepackage[mathlines]{lineno}

\crefname{section}{Section}{Sections}
\crefname{subsection}{Section}{Sections}
\crefname{subsubsection}{Section}{Sections}
\crefname{example}{Example}{Examples}

\newcommand{\jump}[1]{\ensuremath{[\![#1]\!]} }
\newcommand{\avg}[1]{\ensuremath{\left\{\!\!\left\{#1\right\}\!\!\right\}} }
\newcommand{\enorm}[1]{{\left\vert\kern-0.25ex\left\vert\kern-0.25ex\left\vert #1 
    \right\vert\kern-0.25ex\right\vert\kern-0.25ex\right\vert}}

\newcommand{\cell}{{K}}
\newcommand{\face}{{F}}
\renewcommand{\div}{\nabla\!\cdot}
\newcommand{\grad}{\nabla}

\renewcommand{\vec}[1]{\boldsymbol{#1}}
\newcommand{\meas}[1]{\lvert #1 \rvert}

\newcommand{\vels}{{\vec{u}_S}}
\newcommand{\velstest}{{\vec{v}_S}}
\newcommand{\velstilde}{{\vec{\tilde{u}}_S}}
\newcommand{\fullperm}{{\vec{K}}}
\newcommand{\ident}{{\vec{I}}}
\newcommand{\perm}{k}
\newcommand{\kovermu}{{\kappa}}

\newcommand{\betatangentfull}{{\frac{\mu\alpha}{\sqrt{\perm}}}}
\newcommand{\betatangent}{{\beta_\tau}}

\newcommand{\betanormalfull}{{\kappa^{-1}h_K}}
\newcommand{\betanormal}{\beta_n} 

\newcommand{\stress}{\vec{\sigma}}
\newcommand{\stressvar}{{\vec{\sigma}(\vels, p_S)}}
\newcommand{\symgradu}{\vec{\epsilon}(\vels)}
\newcommand{\symgrad}[1]{\vec{\epsilon}(#1)}
\newcommand{\symgradop}{{\vec{\epsilon}}}

\newcommand{\uonormal}{{\vec{n}}}
\newcommand{\utangent}{{\vec{\tau}}}
\newcommand{\uonormals}{{\vec{n}_S}}
\newcommand{\uonormald}{{\vec{n}_D}}

\newcommand{\bfS}{{\vec{f}_S}}
\newcommand{\bV}{{\vec{V}}}
\newcommand{\bH}{{\vec{H}}}
\newcommand{\bu}{{\vec{u}}}
\newcommand{\bv}{{\vec{v}}}

\newcommand\Pnot{\ensuremath{{P}_0}}


\newcommand{\tracenormal}{{T^{}_{n}}}
\newcommand{\tracenormalprime}{{T^{\prime}_{n}}}
\newcommand{\tracetangent}{{T^{}_{\tau}}}
\newcommand{\tracetangentprime}{{T^{\prime}_{\tau}}}




\title{
  Robust 
  monolithic solvers for
  the Stokes-Darcy problem 
  with the Darcy equation in primal form
  \thanks{Submitted to the editors \today. The authors are listed in alphabetical order.\funding{
WMB acknowledges support from the Dahlquist Research Fellowship, funded by Comsol AB.      
The work of TK was financially supported by the European Union's Horizon 2020 research and innovation programme under the Marie Sk{\l{}}odowska-Curie grant agreement No~801133.      
MK acknowledges support from the Research Council of Norway (NFR) grant No~303362.
KAM acknowledges support from the Research Council of Norway grant No~300305 and 301013.}}
}

\author{
  Wietse M. Boon\thanks{KTH Royal Institute of Technology, Stockholm, Sweden (\email{wietse@kth.se})} \and  
  Timo Koch\thanks{Department of Mathematics, University of Oslo, Norway (\email{timokoch@uio.no, kent-and@uio.no})} \and%
  Miroslav Kuchta\thanks{Simula Research Laboratory, Oslo, Norway (\email{miroslav@simula.no}). Corresponding author} \and%
  Kent-Andr{\'e} Mardal\footnotemark[3] \footnotemark[4]%
}

\begin{document}

\maketitle

\begin{abstract}
  We construct mesh-independent and parameter-robust monolithic solvers
  for the coupled primal Stokes-Darcy problem. Three different formulations and
  their discretizations in terms of conforming and
  non-conforming finite element methods and finite volume methods are considered.
  In each case, robust preconditioners are derived using a unified theoretical
  framework. In particular, the suggested preconditioners utilize operators in fractional Sobolev spaces.
  Numerical experiments demonstrate the parameter-robustness of the proposed solvers.
\end{abstract}

\begin{keywords}
Robust solvers, parameter-robust preconditioning, Stokes-Darcy, Free-flow porous media interaction, Perturbed saddle-point problems 
\end{keywords}

\begin{AMS}
65F08
\end{AMS}

 
\newtheorem{thm}{Theorem}[section]
\newtheorem{prop}{Property}[section]
\newtheorem{remark}{Remark}[section]
\newtheorem{example}{Example}[section]

\section{Introduction}\label{sec:intro}

In this work, we propose efficient solvers for multi-physics systems where
a moving fluid (e.g. channel flow) governed by the Stokes equations in one sub-domain 
interacts with fluid flow in porous media described by the Darcy equation in a
neighboring sub-domain. The main contribution is a framework which allows us
to construct parameter-robust preconditioners for iterative solvers of linear
systems arising from different discretizations of the coupled Stokes-Darcy problem. The theory is
confirmed and complemented by extensive numerical experiments building on modern and open-source numerical software frameworks.

Systems exhibiting free flow coupled with porous medium flow are ubiquitous in nature
appearing in numerous environmental, industrial (see e.g. \cite{discacciati2009navier} and references therein), and medical applications~\cite{ROHAN2021106404} .
Discretization of the Stokes-Darcy problem is challenging with
many finite element (e.g. \cite{discacciati2002mathematical, girault2014mortar, layton2002coupling, karper2009unified, riviere_dg, riviere2005locally, gatica2008conforming, burman2007unified, badia2009unified}) and finite volume schemes
(e.g. \cite{Shiue2018,Schneider2020}) devised with the aim to obtain robust approximation properties.
Moreover,
the coupled system presents a difficulty for construction of numerical solvers
as in the applications the problem parameters weighting different terms of the equations
may differ by several orders of magnitude due to, for example, variations in material parameters or
large contrast of length scales (e.g. micro/macro-circulation modelling \cite{koch2020multiscale, smith2007interstitial}).

These challenges have been addressed in a number of works.
In general, we can distinguish between monolithic approaches
(where all the problem unknowns are solved for at once) and domain-decomposition (DD)
techniques (where the coupled system is solved using iterations between
the sub-domain problems). In the context of primal Stokes-Darcy problem,
which will be studied in this work, DD solvers have been established e.g. in \cite{discacciati2002mathematical, discacciati2007robin, discacciati2018optimized, chen2011parallel, caiazzo2014classical}.
Monolithic solvers have been developed primarily for the non-symmetric problem formulation in terms of
Krylov solvers (GMRes) with block-diagonal and triangular preconditioners
\cite{cai2009preconditioning} or constrained indefinite preconditioners \cite{chidyagwai2016constraint}. 
However, existing solvers are typically robust only in certain parameter
regimes (cf. \cite{cai2009preconditioning, chidyagwai2016constraint}) 
or rely on algorithmic parameters that may be difficult to tune (e.g. Robin parameters in DD \cite{discacciati2018optimized}).

Monolithic methods are in particular popular in applications with more complex physics,
e.g. \cite{Mosthaf2011,Baber2012,Coltman2020,Ackermann2021}, for their property that the interface conditions are fulfilled up
to numerical precision independent of tuning parameters, and the practical
observation that monolithic schemes often outperform DD schemes in cases
where the DD solver requires many sub-domain iterations. This can also be the case if optimal DD parameters are unknown for the specific problem and parameters or costly to determine. For completeness, we mention that there are also works that successfully apply DD techniques for problems with more complex physics, e.g. \cite{Birgle2018}.
%

In \cite{wietse,holter2020robust, Luo2017UzawaSI},
robust solvers for the Stokes-Darcy problem with Darcy equation in mixed form (see e.g.~\cite{layton2002coupling, galvis2007non}) are constructed.
While the mixed form has the advantage in the finite element context of ensuring local mass conservation,
the total number of degrees of freedom is significantly reduced with the Darcy problem in the primal form.
Finite volume schemes feature local mass conservation by construction in both cases.

In the following, we construct robust monolithic solvers for the primal
Stokes-Darcy system. More precisely, by considering different discretizations 
of the coupling conditions, we derive three different symmetric formulations
which are amenable to discretization by finite element (FEM) or finite volume methods (FVM).
Well-posedness of the formulations is established
within an abstract framework and consequently block-diagonal preconditioners are
constructed by operator preconditioning \cite{Mardal2011}. A crucial component of the
analysis is the formulation in terms of fractional norms on the interface between the
sub-domains. In turn, the proposed preconditioners utilize non-standard and non-local operators. However, as the number of degrees of freedom on the interface is often small, we
demonstrate that the preconditioners are feasible also in practical
applications.

Our work is structured as follows. In \cref{sec:problem}, we state the governing equations and coupling conditions, introduce the three variational formulations considered in this work, and show in a motivating example that a simple idea based on standard norms does not lead to a parameter-robust preconditioner.
An abstract theory is then developed in \cref{sec:abstract} and applied to the different formulations. Numerical experiments showcasing 
robustness of the proposed preconditioners and their efficiency are presented and discussed
in \cref{sec:numeric}.


\section{Problem formulation}\label{sec:problem}

Let $\Omega_S, {\Omega}_D\subset \mathbb{R}^d$, $d \in \{ 2, 3\}$, be two nonoverlapping Lipschitz
domains sharing a common interface $\Gamma = \partial{\Omega}_S \cap \partial{\Omega}_D \subset \mathbb{R}^{d-1}$.
Let $\Omega_D$ represent a porous medium in which we consider Darcy flow in primal form, i.e. formulated solely
in terms of pressure $p_D$,
\begin{equation}\label{eq:darcy}
\div \left(- \mu^{-1} \fullperm \grad {p}_D \right) = {f}_D,
\end{equation}
with constant fluid viscosity $\mu > 0$, and isotropic and homogeneous intrinsic permeability
$\fullperm = \perm\vec{\ident}$. For notational convenience, we further
let $\kovermu := \mu^{-1} \perm$.

In the free-flow domain $\Omega_S$, we consider the Stokes problem,
\begin{subequations}\label{eq:stokes}
\begin{align}
  -\div \stressvar &= \bfS, \label{eq:stokes_mom} \\
  -\div \vels &= 0, \label{eq:stokes_mass}
\end{align}
\end{subequations}
with $\stressvar = 2 \mu \symgradu - p_S\ident$ and $\symgradu = \frac{1}{2} \left( \grad\vels + \grad\vels^T \right)$.

To couple the Stokes and Darcy systems, let $\uonormal := \uonormals$ be
the outer normal of the Stokes domain and let $\utangent := \ident - (\uonormal \otimes \uonormal)$ be the projection onto the tangent bundle of the interface.
The following conditions are then assumed to hold on the interface $\Gamma$
\begin{subequations}\label{eq:interface}
\begin{align}
  \utangent \cdot \stressvar\cdot \uonormal + \betatangent \utangent \cdot \vels&= \vec{0},
  \label{eq:BJS}  \\
  \uonormal \cdot\stressvar\cdot \uonormal + {p}_D &= 0,   \label{eq:stress}  \\
  \uonormal\cdot \vels + \uonormal \cdot \kovermu \nabla p_D &= 0.   \label{eq:mass_csrv}  
\end{align}
\end{subequations}
Here, the first of the coupling conditions is the well-established Beavers-Joseph-Saffman (BJS)
condition \cite{Beavers1967a,Saffman1971a,Mikelic2000a} with $\betatangent := \betatangentfull$,
and constant $\alpha \geq 0$. Finally, conditions \eqref{eq:stress}-\eqref{eq:mass_csrv} enforce normal stress continuity
and mass conservation.

To close the coupled problem \eqref{eq:darcy}-\eqref{eq:interface},
we prescribe the following (homogeneous) boundary conditions
\begin{subequations} \label{eq: bcs}
\begin{align}
  \vels &= 0 \quad \text{on } \Gamma_S^{\vec{u}}, &
  \uonormals \cdot \stressvar &= 0 \quad \text{on } \Gamma_S^{\stress} \ne \emptyset, \\
  - \uonormald \cdot \kappa \nabla p_D &= 0 \quad \text{on } \Gamma_D^u, &
  p_D &= 0 \quad \text{on } \Gamma_D^p \ne \emptyset.
\end{align}
\end{subequations}

Here, we assume that $\Gamma_S^u \cup \Gamma_S^{\stress} \cup \Gamma$ forms a
disjoint decomposition of $\partial \Omega_S$ and, analogously, $\Gamma_D^{\vec{u}} \cup \Gamma_D^p \cup \Gamma$
is a disjoint partition of $\partial \Omega_D$. Since we assume that both $\Gamma_S^{\stress}$ and
$\Gamma_D^p$ have positive measure, $\Gamma$ cannot be a closed surface (or curve in 2D).
In turn, we make the assumption that its boundary touches the boundary sections on which Stokes stress and
Darcy flux boundary conditions are imposed, i.e. $\partial \Gamma \subseteq \partial \Gamma_S^{\stress} \cup \partial \Gamma_D^u$.
These assumptions are made specifically to simplify the analysis in \cref{sec:abstract} and will be relaxed in the numerical experiments of \cref{sec:numeric}.


\subsection{Three variational formulations} \label{sub: three variational formulations}
In this work, we focus on three different formulations of the coupled problem \eqref{eq:darcy}-\eqref{eq: bcs}. The formulations differ in the manner in which the flux continuity condition \eqref{eq:mass_csrv} is incorporated. The first uses the trace of $p_D$ on the interface to enforce this condition and we call this formulation the \emph{Trace} (Tr) formulation. The second formulation uses the interface pressure as a Lagrange multiplier to enforce flux continuity and is therefore referred to as the \emph{Lagrange multiplier} (La) system. Finally, the third system uses a Robin-type of interface condition and is thus called the \emph{Robin} (Ro) formulation. 

Each system is presented herein as a variational formulation posed in (subspaces of) spaces of square integrable functions. We assume that the spaces possess sufficient regularity for the (differential) operators in the systems to be well-defined. However, we reserve the precise definitions of these function spaces for a later stage since these require appropriately weighted norms. 

The first formulation follows the classic derivation of \cite{discacciati2002mathematical}. Here, it is assumed that the pressure $p_D$ has sufficient regularity for its trace on $\Gamma$ to be well-defined. The weak form of \eqref{eq:darcy}-\eqref{eq: bcs} yields the \emph{Trace formulation}: Find
$(\vels, p_S, p_D)\in \bV_S \times Q_S \times Q_D$ such that
\begin{equation}\label{eq:ds_weak}
  \begin{aligned}
    (2\mu\symgrad{\vels}, \symgrad{\velstest})_{\Omega_S} 
    + \betatangent(\utangent \cdot \vels, \utangent \cdot \velstest)_{\Gamma}&\\ 
    - (p_S, \nabla\cdot \velstest )_{\Omega_S} 
    + (p_D, \uonormal \cdot \velstest)_{\Gamma}  &= (\bfS, \velstest)_{\Omega_S},
    &\forall \velstest \in \bV_S, \\
    -(\nabla\cdot \vels, q_S)_{\Omega_S} &= 0,
    &\forall q_S \in Q_S, \\
    (\uonormal \cdot \vels, q_D)_{\Gamma} - (\kovermu \nabla p_D, \nabla q_D)_{\Omega_D} &= (f_D, q_D)_{\Omega_D},
    &\forall q_D \in Q_D.
  \end{aligned}    
\end{equation}
Here, and throughout this work, we use $(f, g)_\Sigma := \int_\Sigma f g$. We employ the same notation for vector and tensor-valued functions defined on a domain $\Sigma$.

Problem \eqref{eq:ds_weak} can be naturally discretized
by ($H^1$-)conforming finite element schemes, for example, the lowest order Taylor-Hood ($\vec{P}_2$-$P_1$)
pair for Stokes velocity and pressure and continuous piece-wise quadratic Lagrange ($P_2$) elements for the Darcy pressure ($\vec{P}_2$-$P_1$-$P_2$ in the following).

The second formulation is motivated by cell-centered discretization methods including finite volume methods and non-conforming finite element methods of lowest order. In that case, the trace of $p_D$ is not (directly) available since there is no interfacial degree of freedom and it is common to use a discrete gradient reconstruction scheme to retrieve the interface pressure. 
To illustrate this, let us assume that the Darcy pressure space $Q_D$ consists of piece-wise constant functions.
Introducing $p_\Gamma$ as the unknown interface pressure, a
two-point approximation (TPFA) of the flux on a facet $F \subset \Gamma$ reads
\begin{equation}\label{eq:tpfa_interface}
  -\uonormal \cdot \kovermu \nabla p_D := -\kovermu \frac{p_D|_{K} - p_\Gamma}{h_K}\mbox{ on }F,
\end{equation}
where $p_D|_{K}$ denotes the pressure in the center of the element $K \subseteq \Omega_D$ with $F \subseteq \partial K$ and $h_K$ is the distance between the
centroids of $K$ and $F$. We recall that $\uonormal$ denotes the unit normal outward to $\Omega_S$.
Applying \eqref{eq:tpfa_interface} in \eqref{eq:mass_csrv} yields a
discrete interface condition
\begin{equation}\label{eq:mass_csrv_discrete}
  \uonormal\cdot\vels + \betanormal^{-1} (p_D|_{K} - p_\Gamma) = 0\mbox{ on }F,
\end{equation}
%
%
with $\betanormal := \betanormalfull > 0$. Despite its motivation originating from the discrete case, we shall now consider $\betanormal$ as a model parameter, allowing for a continuous formulation. In particular, we use \eqref{eq:mass_csrv_discrete} to model the flux continuity condition \eqref{eq:mass_csrv} and arrive at the \emph{Lagrange multiplier formulation}:
Find $(\vels, p_S, p_D, p_{\Gamma})\in \bV_S \times Q_S \times Q_D\times \Lambda$
such that
\begin{equation}\label{eq:dsLM_weak}
  \begin{aligned}
    (2\mu\symgrad{\vels}, \symgrad{\velstest})_{\Omega_S} 
    + \betatangent(\utangent \cdot \vels, \utangent \cdot \velstest)_{\Gamma}&\\ 
    - (p_S, \nabla\cdot \velstest )_{\Omega_S} 
    + (p_\Gamma, \uonormal \cdot \velstest)_{\Gamma}  &= (\bfS, \velstest)_{\Omega_S},
    &\forall \velstest &\in \bV_S, \\
    -(\nabla\cdot \vels, q_S)_{\Omega_S} &= 0,
    &\forall q_S &\in Q_S, \\
    - \left(\kovermu \nabla p_D, \nabla q_D \right)_{\Omega_D} - \left( \betanormal^{-1} \left(p_D-p_{\Gamma}\right), q_D\right)_{\Gamma}&= (f_D, q_D)_{\Omega_D},
    &\forall q_D &\in Q_D,\\
    (\uonormal \cdot \vels, q_{\Gamma})_{\Gamma} + \left(\betanormal^{-1} \left(p_D-p_{\Gamma}\right), q_{\Gamma}\right)_{\Gamma} &= 0,
    &\forall q_{\Gamma} &\in \Lambda.\\
  \end{aligned}
\end{equation}
By construction, this formulation is tailored for discretization methods that use cell-centered pressure variables. The precise discretization of the second-order terms $(2\mu\symgrad{\vels}, \symgrad{\velstest})_{\Omega_S}$ and $\left(\kovermu \nabla p_D, \nabla q_D \right)_{\Omega_D}$ is presented
in \cref{sec:discrete_ops}. Furthermore, we emphasize that only the specific choice of $\betanormal = \betanormalfull$ leads to a discretization scheme that is consistent with \eqref{eq:darcy}-\eqref{eq: bcs}.

Our third and final formulation is obtained by eliminating the Lagrange multiplier. 
For that, we once again consider a facet $F$ with an adjacent cell $K \subseteq \Omega_D$. The combination of the momentum balance \eqref{eq:stress} with condition \eqref{eq:mass_csrv_discrete} yields a Robin-type interface condition
\begin{equation}\label{eq:robin}
  - \uonormal \cdot \stressvar \cdot \uonormal 
  = p_D|_{K} + \betanormal \vels \cdot \uonormal \mbox{ on }F.
\end{equation}
By using \eqref{eq:robin} to model flux continuity, we arrive at
the \emph{Robin formulation}: Find $(\vels, p_S, p_D)\in \bV_S \times Q_S \times Q_D$ such that
\begin{equation}\label{eq:ds_robin_weak}
  \begin{aligned}
    (2\mu\symgrad{\vels}, \symgrad{\velstest})_{\Omega_S}
    + \betatangent (\utangent \cdot \vels, \utangent \cdot \velstest)_{\Gamma}& \\
    + \betanormal (\uonormal \cdot \vels, \uonormal \cdot \velstest)_{\Gamma}& \\  
    - (p_S, \nabla\cdot \velstest )_{\Omega_S} 
    + (p_D, \uonormal \cdot \velstest)_{\Gamma}  &= (\bfS, \velstest)_{\Omega_S}, & 
    \forall \velstest &\in \bV_S,\\
    - (\nabla\cdot \vels, q_S)_{\Omega_S} &= 0,
    &\forall q_S &\in Q_S, \\
    (\uonormal \cdot \vels, q_D)_{\Gamma} - (\kovermu \nabla p_D, \nabla q_D)_{\Omega_D} &= (f_D, q_D)_{\Omega_D},
    &\forall q_D &\in Q_D.\\
  \end{aligned}    
\end{equation}
Similar to \eqref{thm:dsLM}, this formulation is amenable to cell-centered finite volume or non-conforming finite element methods. We emphasize that, although variational formulations are more common for finite element practitioners, these final two systems can be interpreted term by term using finite volume discretization techniques.

\subsection{Motivating example}

Having defined the variational problems, our aim is to construct
parameter-robust solvers for all three formulations. 
By robustness, we mean that the preconditioned system has a bounded eigenvalue spectrum independent of modeling and discretization parameters, in particular $\mu$, $\kovermu$, $\betatangent$, the discretization length $h$, and the Robin coefficient $\betanormal$. We base our approach on operator preconditioning using non-standard, weighted Sobolev spaces.

To illustrate the necessity of these techniques, let us first
illustrate that a na{\"i}ve but seemingly sensible approach in standard norms does not yield parameter-robustness.
More precisely, in \cref{ex:naive} we show that natural norms of the solution
spaces of the coupled Stokes-Darcy problem do not translate to robust
preconditioners.

\begin{example}[Standard norm preconditioner]\label{ex:naive}
  We consider the Trace formulation \eqref{eq:ds_weak} on $\Omega_S=\left[0, 1\right]\times \left[1, 2\right]$
  and $\Omega_D=\left[0, 1\right]\times \left[0, 1\right]$ with the source
  terms $\bfS$, $f_D$ defined in \eqref{eq:mms_rhs} and artificially balanced (non-zero right-hand side) coupling conditions \eqref{eq:mms_coupling} (see \cref{sec:mms} for details).
  We let $\Gamma^{\vec{u}}_S$ be the top edge of $\Omega_S$ while the bottom
  edge of $\Omega_D$ is $\Gamma^p_D$. On the remaining parts of the boundaries, Neumann
  boundary conditions are assumed, i.e. traction for the Stokes and normal flux 
  for the Darcy problem. The boundary conditions are non-homogenerous
  with the data based on the manufactured exact solution \eqref{eq:mms}.

  Since \eqref{eq:ds_weak} with the above boundary conditions is well-posed
in $\bV_S=\bH_{0, \Gamma^{\vec{u}}_S}^{1}(\Omega_D)$,
  $Q_S=L^2(\Omega_D)$, and $Q_D=H_{0, \Gamma^p_D}^1(\Omega_D)$ (see \cite{discacciati2002mathematical}), we may want to consider as
  preconditioner the block-diagonal operator
  \begin{equation}\label{eq:naive}
\mathcal{B} := \begin{bmatrix}
  -\nabla\cdot(2\mu\symgradop) + \betatangent\tracetangentprime\tracetangent & & \\
   & (2\mu)^{-1}I& \\
  & & -\kovermu\Delta\\
\end{bmatrix}^{-1},
  \end{equation}
  where $\tracetangent:\bV_S\rightarrow \bV_S'$ is the tangential trace operator.
  We remark that \eqref{eq:naive} is the Riesz map with respect to the parameter-weighted
  inner products of $\bV_S\times Q_S\times Q_D$, which for $\mu=1$, $\perm=1$,
  $\betatangent = 0$, reduce to standard inner products of the spaces. In particular,
  for the first block of \eqref{eq:naive}, we recall that the first Korn inequality holds
  as $\lvert \Gamma^{\vec{u}}_S \rvert>0$. 
  
  Using discrete spaces $\bV_{S, h}\subseteq \bV_S $, $Q_{S, h}\subseteq Q_S$, $Q_{D, h}\subseteq Q_D $
  constructed respectively with $\vec{P}_2$, $P_1$ and $P_2$ elements 
  we investigate robustness of \eqref{eq:naive}
  by considering boundedness of preconditioned MinRes iterations with mesh
  refinement and parameter variations. The iterative solver
  is started from
  an initial vector representing a random function in $\bV_{S, h}\times Q_{S, h}\times Q_{D, h}$ (implying that the
  degrees of freedom (dofs) associated with the Dirichlet boundary conditions are
  set to $0$, while the remaining dofs are drawn randomly from $\left[0, 1\right)$) 
  and terminates once the preconditioned residual norm is reduced by factor
  10\textsuperscript{8}. The preconditioner is computed by LU decomposition.
  
  In \cref{tab:bad_precond}, we report the number of MinRes iterations
  required to satisfy the convergence criteria. We observe that the iterations
  are stable in mesh size. However, there is a clear dependence on permeability
  and the iterations grow with decreasing $\perm$. The BJS parameter (or $\betatangent$) seems
  to have little effect on the solver convergence.

  {We conclude that even though
    the blocks of \eqref{eq:naive} define parameter-robust preconditioners for the individual
    Stokes and Darcy subproblems this property is not sufficient for parameter-robustness
    in the coupled Stokes-Darcy problem.
  }
  
\begin{table}
  \centering
  \caption{
    Performance of block diagonal preconditioner \eqref{eq:naive} for
    the Trace formulation of Stokes-Darcy problem \eqref{eq:ds_weak} discretized by
    $\vec{P}_2$-$P_1$-${P}_2$ elements. Setup of \cref{ex:naive}
    and $\mu=1$. Discretization length scale is denoted by $h$.
    This na{\"i}ve preconditioner is sensitive to variations
    in permeability $\perm$.
  }
  \label{tab:bad_precond}
  \vspace{-10pt}
  \footnotesize{
    \begin{tabular}{c|cccc||cccc}
      \hline
      & \multicolumn{4}{c||}{$\alpha=1$} & \multicolumn{4}{c}{$\alpha=0$}\\
      \hline
      \backslashbox{$\perm$}{$h$} & $2^{-4}$ & $2^{-5}$ & $2^{-6}$ & $2^{-7}$ & $2^{-4}$ & $2^{-5}$ & $2^{-6}$ & $2^{-7}$\\
      \hline
$1$         & $34$ & $33$  & $32$  & $32$   & $34$  & $33$  & $32$  & $32$\\       
$10^{-1}$    & $39$  & $39$  & $39$  & $37$   & $39$  & $41$  & $39$  & $39$\\       
$10^{-2}$    & $52$  & $52$  & $50$  & $49$   & $55$  & $56$  & $54$  & $53$\\       
$10^{-3}$    & $84$  & $84$  & $82$  & $82$   & $95$  & $90$  & $89$  & $88$\\       
$10^{-4}$    & $165$ & $186$ & $184$ & $187$  & $181$ & $202$ & $205$ & $200$\\   
      \hline
    \end{tabular}
  }
    \end{table}
\end{example}

Following this introductory example in which full parameter-robustness could not be achieved, parameter-robust preconditioners for all
presented formulations of the Stokes-Darcy problem will be constructed using a unified framework introduced next.

\section{Abstract setting}\label{sec:abstract}

We observe that each of the three formulations \eqref{eq:ds_weak}, \eqref{eq:dsLM_weak} and \eqref{eq:ds_robin_weak} presented in \cref{sub: three variational formulations} possesses a symmetric structure. Furthermore, the three systems can be identified as perturbed saddle point problems and we detail this observation in this section. To fully exploit this identification, we present an abstract theory of well-posedness for such problems. After introducing the used notation conventions, the main abstract result is shown and the three systems are each presented and analyzed in this functional framework. 

\subsection{Notation and preliminaries} \label{sub: notation}
We start with an exposition of notation conventions. 
For a bounded domain $\Omega\subset\mathbb{R}^d$, we let $L^2(\Omega)$ denote the space of square integrable functions and let $H^k(\Omega)$, $k\geq 1$ be the usual Sobolev space of functions with integer
derivatives up to order $k$ in $L^2(\Omega)$. 
Homogeneous boundary conditions on $\Gamma \subseteq \partial \Omega$ are indicated using a subscript $0$, i.e. $H_{0, \Gamma}^1(\Omega) := \{ f \in H^1(\Omega) \mid f = 0 \text{ on } \Gamma \}$. 
Vector-valued functions and their corresponding spaces are denoted by bold font. 

The trace space of $H^1(\Omega)$ on $\Gamma$ corresponds to $H^{\frac12}(\Gamma)$, the interpolation space between $L^2(\Gamma)$ and $H^1(\Gamma)$. Its dual is denoted by $H^{-\frac12}(\Gamma)$. More generally, we let $X'$ be the dual of a Hilbert space $X$ and let angled brackets $\langle \cdot, \cdot\rangle_{X', X}$ denote the duality pairing. The subscript on this pairing may be omitted when no confusion arises.

$\lVert \cdot \rVert_{k, \Omega}$ denotes the norm on $H^k(\Omega)$ and $\lVert \cdot \rVert_{\Omega} := \lVert \cdot \rVert_{0, \Omega}$. A weighted space $\alpha X$ with $\alpha > 0$ is endowed with the norm $\| f \|_{\alpha X} := \| \alpha f \|_X$ and its dual is given by $(\alpha X)' = \alpha^{-1} X'$. Moreover, given two Hilbert spaces $X,Y$, the intersection $(X \cap Y)$ and sum $(X + Y)$ form Hilbert spaces endowed with the norms
\begin{align*}
  \| f \|_{X \cap Y}^2 &:= \| f \|_X^2 + \| f \|_Y^2, & 
  \| f \|_{X + Y}^2 &:= \inf_{g \in Y} \left( \| f - g \|_X^2 + \| g \|_Y^2 \right),
\end{align*}
respectively. Moreover, we recall the following relations \cite{bergh2012interpolation}:
\begin{align*}
  (X \cap Y)' &= X' + Y', &
  X \cap (Y_1 + Y_2) &= (X \cap Y_1) + (X \cap Y_2).
\end{align*}

{Finally, the relation $x\lesssim y$ implies that there exists a constant $c>0$, independent of
model parameters, such that $x\leq c y$.}

\subsection{Well-posedness theory of perturbed saddle point problems}

Let $V$ and $Q$ be Hilbert spaces to be specified below. Let $\mathcal{A}: V \times Q \to (V \times Q)'$ be a linear operator of the form
\begin{align} \label{eq: perturbed saddle point}
\mathcal{A} := 
\begin{bmatrix}
  A & B^{\prime} \\
  B & -C
\end{bmatrix},
\end{align}
in which the operators $A$, $B$, and $C$ are subject to the following assumptions:
\begin{subequations} \label{eqs: assumptions}
\begin{itemize}
  \item Let $A: V \to V'$ be such that $\langle Au, v \rangle$ forms an inner product on $V$. We denote the induced norm by
  \begin{align} \label{def: A norm}
    \| v \|_A^2 &:= \langle Av, v \rangle, &
    \forall v &\in V.
  \end{align}
  \item Let $B: V \to Q'$ be a linear operator. Moreover, let $| \cdot |_B$ be a semi-norm on $Q$ such that two constants $\zeta_0, \zeta_\infty \in \mathbb{R}$ exist with
  \begin{align} \label{eq: inf-sup B}
    0 < \zeta_0
    &\le \sup_{v \in V} \frac{\langle Bv, p \rangle}{\| v \|_A | p |_B}
    \le \zeta_\infty
    < \infty, &
    \forall p &\in Q.
  \end{align}
  We refer to $\zeta_0$ as the inf-sup constant and $\zeta_\infty$ as the continuity constant.
  \item Let $C: Q \to Q'$ be such that $\langle Cp, q \rangle$ forms a semi-inner product on $Q$. The induced semi-norm is denoted as
  \begin{align} \label{def: C norm}
    | q |_C^2 &:= \langle Cq, q \rangle, &
    \forall q &\in Q.
  \end{align}
  \item 
  Finally, we assume that the following is a proper norm:
  \begin{align} \label{def: e-norm}
      \enorm{(u,p)}^2 := \| u \|_A^2 + | p |_B^2 + | p |_C^2,
  \end{align}
  and we let $V \times Q$ be the space of (pairs of) measurable functions that are bounded in this norm.
\end{itemize}
\end{subequations}

The model problem of interest then reads:
Given $(f, g) \in (V \times Q)'$, find $(u, p) \in V \times Q$ such that 
\begin{align} \label{eq: model problem}
  \mathcal{A} (u, p) = (f, g).
\end{align}

{We note that similar systems were recently analyzed in
  \cite{hong2021new, boon2021robust} but here we exploit the fact that the operator $A$ is coercive on the entire space $V$, instead of on the kernel of $B$.}

\begin{theorem} \label{thm: abstract wellposedness}
    If conditions \eqref{eqs: assumptions} are fulfilled, then the saddle point problem \eqref{eq: model problem} is well-posed in $V \times Q$ endowed with the norm \eqref{def: e-norm}.
\end{theorem}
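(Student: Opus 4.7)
The plan is to invoke the Banach--Ne\v{c}as theorem for the symmetric bilinear form
\[
  \mathcal{B}((u,p),(v,q)) := \langle Au, v \rangle + \langle Bv, p \rangle + \langle Bu, q \rangle - \langle Cp, q \rangle
\]
associated with $\mathcal{A}$. Because $\mathcal{B}$ is symmetric it is enough to verify continuity and a single inf--sup condition on $V \times Q$ with respect to the norm \eqref{def: e-norm}; existence, uniqueness, and continuous dependence on $(f,g) \in (V \times Q)'$ then follow at once.

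Continuity is immediate: Cauchy--Schwarz applied to the inner products induced by $A$ and $C$ bounds the diagonal terms by $\|u\|_A \|v\|_A$ and $|p|_C |q|_C$, while the upper bound in \eqref{eq: inf-sup B} controls the off-diagonal terms by $\zeta_\infty \|v\|_A |p|_B$ and $\zeta_\infty \|u\|_A |q|_B$. Summing these yields $|\mathcal{B}((u,p),(v,q))| \lesssim \enorm{(u,p)} \, \enorm{(v,q)}$ with a constant depending only on $\zeta_\infty$.

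The main step is to establish the inf--sup estimate. Given $(u,p) \in V \times Q$, the inf--sup hypothesis \eqref{eq: inf-sup B}, combined with Riesz representation of $B'p \in V'$ in the $A$-inner product, produces an element $w_p \in V$ with $\|w_p\|_A = |p|_B$ and $\langle Bw_p, p \rangle \geq \zeta_0 |p|_B^2$. I would then test with the Fortin-type pair $(v,q) := (u + \epsilon w_p,\, -p)$ for a small $\epsilon > 0$ to be chosen. The two occurrences of $\langle Bu, p \rangle$ cancel, leaving
\[
  \mathcal{B}((u,p),(v,q)) = \|u\|_A^2 + \epsilon \langle Au, w_p \rangle + \epsilon \langle Bw_p, p \rangle + |p|_C^2 \geq \|u\|_A^2 - \epsilon \|u\|_A |p|_B + \epsilon \zeta_0 |p|_B^2 + |p|_C^2.
\]
A Young inequality on the mixed term followed by the choice $\epsilon = \zeta_0$ gives $\mathcal{B}((u,p),(v,q)) \gtrsim \enorm{(u,p)}^2$, while the triangle inequality together with $\|w_p\|_A = |p|_B$ yields $\enorm{(v,q)} \lesssim \enorm{(u,p)}$. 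Dividing produces the desired inf--sup bound with a constant depending only on $\zeta_0$.

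The delicate point, and the reason the last item in \eqref{eqs: assumptions} is posted as an explicit hypothesis, is that $|\cdot|_B$ and $|\cdot|_C$ are only semi-norms on $Q$: neither the inf--sup estimate nor the $C$-control alone pins down $p$, and it is only their sum together with $\|u\|_A^2$ that is postulated to constitute a genuine norm. Once this postulate is in force, the estimates above immediately upgrade to well-posedness on the full space $V \times Q$. The novelty relative to classical Brezzi theory is the coercivity of $A$ on all of $V$ (rather than merely on $\ker B$), which is precisely what permits the direct choice of $w_p$ above without any auxiliary projection onto the kernel.
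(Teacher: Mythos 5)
Your proposal is correct and follows essentially the same route as the paper: continuity via Cauchy--Schwarz and the upper bound $\zeta_\infty$, then a single inf--sup estimate obtained by testing with $(u+\epsilon w_p,\,-p)$, Young's inequality on the cross term, and the Banach--Ne\v{c}as--Babu\v{s}ka theorem. The only cosmetic difference is the normalization of the auxiliary element (you fix $\|w_p\|_A=|p|_B$ and take $\epsilon=\zeta_0$, whereas the paper fixes $\langle Bv^p,p\rangle=|p|_B^2$ and scales by $\zeta_0^2$), which yields the identical lower bound $\tfrac12\|u\|_A^2+\tfrac12\zeta_0^2|p|_B^2+|p|_C^2$.
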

\begin{proof}
    We first show that $\mathcal{A}$ is continuous. By the Cauchy-Schwarz inequality, we have
    \begin{align*}
      \langle Au, v \rangle
      &\le \| u \|_A \| v \|_A, &
      \forall u,v &\in V, \\
      \langle Cp, q \rangle
      &\le | p |_C | q |_C, &
      \forall p,q &\in Q.
    \end{align*}
    Finally, the existence of $\zeta_\infty$ in \eqref{eq: inf-sup B} ensures that $B$ is continuous. The combination of these three inequalities provides the continuity of $\mathcal{A}$. Since the operator $\mathcal{A}$ is linear and symmetric, it now suffices to show that $\xi$ exists such that
    \begin{align}
        \inf_{(u, p)} \sup_{(v, q)} \frac{\langle \mathcal{A} (u, p), (v, q) \rangle}{\enorm{(u, p)} \enorm{(v, q)} }
        \ge \xi > 0.
    \end{align}
    
    Let $(u, p) \in V \times Q$ be given and let us proceed by constructing a suitable test function $(v, q) \in V \times Q$. First, the inf-sup condition \eqref{eq: inf-sup B} allows us to construct $v^p \in V$ such that 
    \begin{align} \label{eq: properties v^p}
        \langle Bv^p, p \rangle &= | p |_B^2, &
        \zeta_0 \| v^p \|_A &\le | p |_B.
    \end{align}
    Now let $(v, q) := (u + \zeta_0^2 v^p, -p)$ with $\zeta_0$ from \eqref{eq: inf-sup B}.
    Substituting these definitions, we obtain:
    \begin{align} \label{eq: intermediate 1}
        \langle \mathcal{A} (u, p), (v, q) \rangle
        &= \langle Au, v \rangle + \langle Bv, p \rangle + \langle Bu, q \rangle - \langle Cp, q \rangle \nonumber\\
        &= \| u \|_A^2 + \langle Au, \zeta_0^2 v^p \rangle + \langle B \zeta_0^2 v^p, p \rangle + | p |_C^2 \nonumber\\
        &= \| u \|_A^2 + \zeta_0^2 \langle Au, v^p \rangle + \zeta_0^2 | p |_B^2 + | p |_C^2.
    \end{align}
    Next, we need to bound the second term in the right-hand side from below. Using the Cauchy-Schwarz inequality, the inequality $2ab \le a^2 + b^2$, and \eqref{eq: properties v^p}, we derive
    \begin{align} \label{eq: intermediate 2}
        \zeta_0^2 \langle Au, v^p \rangle 
        &\ge - \zeta_0^2 \| u \|_A \| v^p \|_A \nonumber\\
        &\ge - \frac12 \| u \|_A^2 - \frac12 \zeta_0^4 \| v^p \|_A^2 \nonumber\\
        &\ge - \frac12 \| u \|_A^2 - \frac12 \zeta_0^2 | p |_B^2.
    \end{align}
    In turn, \eqref{eq: intermediate 1} and \eqref{eq: intermediate 2} imply
    \begin{align} \label{eq: lower bound}
        \langle \mathcal{A} (u, p), (v, q) \rangle
        &\ge \frac12 \| u \|_A^2 + \frac12 \zeta_0^2 | p |_B^2 + | p |_C^2 
        \ge 
        \frac{\min \{ 1, \zeta_0^2 \}}{2} \enorm{(u, p)}^2.
    \end{align}

    Next, we show that $(v, q)$ is bounded in the norm \eqref{def: e-norm} by $(u, p)$:
    \begin{align} \label{eq: upper bound}
        \enorm{(v, q)}^2 
        &= \| u + \zeta_0^2 v^p \|_A^2 + | p |_B^2 + | p |_C^2  \nonumber \\
        &\le 2 \| u \|_A^2 + 2 \| \zeta_0^2 v^p \|_A^2 + | p |_B^2 + | p |_C^2  \nonumber \\
        &\le 2 \| u \|_A^2 + (1 + 2 \zeta_0^2) | p |_B^2 + | p |_C^2 \nonumber \\
        &\le \max\{2, 1 + 2 \zeta_0^2\} \enorm{(u, p)}^2.
    \end{align}

    Combining \eqref{eq: lower bound} and \eqref{eq: upper bound}, we have 
    \begin{align*}
        \frac{\langle \mathcal{A} (u, p), (v, q) \rangle}{\enorm{(u, p)} \enorm{(v, q)} }
        \ge 
        \frac{\min \{ 1, \zeta_0^2 \}}{2 \sqrt{\max\{2, 1 + 2 \zeta_0^2\}}} > 0.
    \end{align*}
    The Banach-Ne\v{c}as-Babu\v{s}ka theorem \cite[Thm. 2.6]{ern2013theory} then provides the result.
\end{proof}
{
  We remark that \cref{thm: abstract wellposedness} is related to the well-posedness
  theory of abstract saddle-point systems with penalties \cite{braess1996stability}.
}

At this point, we are ready analyze the individual
Stokes-Darcy formulations within the introduced abstract framework.

\subsection{The Trace formulation}\label{sec:ds_standard}
Observe that the left-hand side of \eqref{eq:ds_weak} defines an operator $\mathcal{A}^\text{Tr}$
on $\bV_{S}\times \left(Q_{S}\times Q_{D}\right)$
\begin{equation}\label{eq:daq_op}
\mathcal{A}^\text{Tr} := \begin{bmatrix}
  -\nabla\cdot(2\mu\symgradop) + \betatangent \tracetangentprime\tracetangent & \nabla & \tracenormalprime\\
  -\nabla\cdot & & \\
  \tracenormal & & \kovermu\Delta
\end{bmatrix},
\end{equation}
where $\tracetangent: \bV_S\rightarrow V^{\prime}_S$ is the tangential trace operator,
and $\tracenormal: \bV_S\rightarrow Q^{\prime}_D$ is the normal trace operator. 
The system thus fits template \eqref{eq: perturbed saddle point} of a perturbed saddle-point problem with
\begin{align*}
  \langle A \vels, \velstest \rangle &:= 
  (2\mu\symgrad{\vels}, \symgrad{\velstest})_{\Omega_S}
  + \betatangent(\utangent \cdot \vels, \utangent \cdot \velstest)_{\Gamma} , \\
  \langle B \vels, (q_S, q_D) \rangle &:=
  - (\nabla\cdot \vels, q_S)_{\Omega_S}
  + (\uonormal \cdot \vels, q_D)_{\Gamma}, \\
  \langle C (p_S, p_D), (q_S, q_D) \rangle &:=
  (\kovermu \nabla p_D, \nabla q_D)_{\Omega_D}.
\end{align*}
Based on these operators, we define the following (semi-)norms:
\begin{subequations}
\begin{align}
    \| \vels \|_A^2 &:= 2\mu \| \symgrad{\vels} \|_{\Omega_S}^2
    + \betatangent \| \utangent \cdot \vels \|_\Gamma^2, \\
    | (p_S, p_D) |_B^2 &:= 
    (2\mu)^{-1} \| p_S \|_{\Omega_S}^2 
    + (2\mu)^{-1} \| p_D \|_{-\frac12, \Gamma}^2, \\
    | (p_S, p_D) |_C^2 &:= \kovermu \| \nabla p_D \|_{\Omega_D}^2.
\end{align}
\end{subequations}
Finally, in the context of \cref{thm: abstract wellposedness}, we
consider the following norm
\begin{equation}\label{eq:th_precond_norm}
    \begin{aligned}
        \enorm{ (\vels, p_S, p_D) }^2 &:= 
        2\mu \| \symgrad{\vels} \|_{\Omega_S}^2
        + \betatangent \| \utangent \cdot \vels \|_\Gamma^2 \\
        &\quad + (2\mu)^{-1} \| p_S \|_{\Omega_S}^2
        + (2\mu)^{-1} \| p_D \|_{-\frac12, \Gamma}^2
        + \kovermu \| \nabla p_D \|_{\Omega_D}^2.
    \end{aligned}
\end{equation}
%

\begin{theorem} \label{thm: Taylor Hood well-posed}
  Problem \eqref{eq:ds_weak} is well-posed in $V \times Q$ endowed with the norm \eqref{eq:th_precond_norm}.
\end{theorem}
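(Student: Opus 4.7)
The plan is to recognize \eqref{eq:ds_weak} as a realization of the abstract template \eqref{eq: perturbed saddle point}, with $V=\bV_S$, $Q = Q_S\times Q_D$, and $A, B, C$ as displayed just above \eqref{eq:th_precond_norm}, and then verify the hypotheses \eqref{eqs: assumptions} so that \cref{thm: abstract wellposedness} applies. I would dispatch the easy conditions first. Because $\meas{\Gamma_S^{\vec{u}}} > 0$, Korn's first inequality on $\bH^1_{0,\Gamma_S^{\vec u}}(\Omega_S)$ yields $\|\symgradop \vels\|_{\Omega_S} \gtrsim \|\vels\|_{1,\Omega_S}$, so $\langle A\vels,\velstilde\rangle$ is an inner product with $\|\vels\|_A^2 \gtrsim \mu \|\vels\|_{1,\Omega_S}^2$; the tangential boundary term is nonnegative and only strengthens this. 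The form $\langle C(p_S,p_D),(q_S,q_D)\rangle$ is manifestly a semi-inner product (in fact an inner product on $Q_D$ by Poincar\'e). Continuity of $B$ follows from Cauchy--Schwarz combined with the trace bound $\|\uonormal \cdot \vels\|_{\frac12,\Gamma} \lesssim \|\vels\|_{1,\Omega_S}$, with the $\mu$-weights of $\|\cdot\|_A$ and $|\cdot|_B$ cancelling exactly. Finally, $\enorm{\cdot}$ is a proper norm on $V\times Q$ because it controls $\|\vels\|_{1,\Omega_S}$, $\|p_S\|_{\Omega_S}$, and $\|p_D\|_{1,\Omega_D}$ (the last via the Poincar\'e inequality on $Q_D$).

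The heart of the argument is the parameter-uniform inf-sup bound $\zeta_0>0$ in \eqref{eq: inf-sup B}. Given $(p_S,p_D)\in Q_S\times Q_D$, I build a test velocity $\velstilde = (2\mu)^{-1}(\alpha_1 \vels_1 + \alpha_2 \vels_2)$ from two pieces. For $\vels_1$, use the definition of the dual norm to choose $\phi \in H^{\frac12}(\Gamma)$ with $\|\phi\|_{\frac12,\Gamma} = \|p_D\|_{-\frac12,\Gamma}$ and $(p_D,\phi)_\Gamma = \|p_D\|_{-\frac12,\Gamma}^2$, then apply a bounded trace-lifting to obtain $\vels_1\in \bV_S$ with $\uonormal\cdot\vels_1|_\Gamma = \phi$, $\utangent \cdot \vels_1|_\Gamma = \vec 0$, and $\|\vels_1\|_{1,\Omega_S}\lesssim \|\phi\|_{\frac12,\Gamma}$. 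For $\vels_2$, exploit that $\meas{\Gamma_S^\sigma}>0$ (Neumann portion) to conclude that $-\div$ is surjective from $\bH^1_{0,\Gamma_S^{\vec u}\cup \Gamma}(\Omega_S)$ onto $L^2(\Omega_S)$, producing $\vels_2$ with $\vels_2|_\Gamma = \vec 0$, $-\div \vels_2 = p_S$, and $\|\vels_2\|_{1,\Omega_S}\lesssim \|p_S\|_{\Omega_S}$. Since $\uonormal \cdot \vels_2|_\Gamma = 0$, substitution yields
\[
  \langle B \velstilde,(p_S,p_D)\rangle = (2\mu)^{-1}\Bigl[\alpha_2 \|p_S\|_{\Omega_S}^2 + \alpha_1 \|p_D\|_{-\frac12,\Gamma}^2 - \alpha_1 (\div \vels_1, p_S)_{\Omega_S}\Bigr].
\]
The cross-term is absorbed via Young's inequality together with $\|\div \vels_1\|_{\Omega_S}\lesssim \|p_D\|_{-\frac12,\Gamma}$ by taking $\alpha_2 = 1$ and $\alpha_1$ sufficiently small, giving $\langle B\velstilde,(p_S,p_D)\rangle \gtrsim |(p_S,p_D)|_B^2$. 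Combined with the bound $\|\velstilde\|_A \lesssim \mu^{-\frac12}(\|\vels_1\|_{1,\Omega_S} + \|\vels_2\|_{1,\Omega_S}) \lesssim |(p_S,p_D)|_B$, this produces a $\zeta_0$ independent of $\mu$ and $\kovermu$; all $\mu$-weights balance precisely because of the $(2\mu)^{-1}$ prefactor.

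The main obstacle is the extension step yielding $\vels_1$: one must realize a prescribed normal trace in $H^{\frac12}(\Gamma)$ by a field in $\bH^1_{0,\Gamma_S^{\vec u}}(\Omega_S)$ with vanishing tangential trace. This is precisely where the geometric hypothesis $\partial \Gamma \subseteq \partial \Gamma_S^\sigma \cup \partial \Gamma_D^u$ from \cref{sec:problem} matters — it ensures that $\phi$ carries no zero-trace constraint on $\partial \Gamma$, so that the relevant interface dual is the full $H^{-\frac12}(\Gamma)$ appearing in $|\cdot|_B$ rather than the more restrictive $H^{-\frac12}_{00}(\Gamma)$. With this lifting in hand, all conditions of \cref{thm: abstract wellposedness} are verified and the theorem delivers well-posedness of \eqref{eq:ds_weak} in the norm \eqref{eq:th_precond_norm}.
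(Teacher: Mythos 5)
Your proposal is correct and follows essentially the same route as the paper: identify the system with the abstract template, verify the easy hypotheses, and prove the inf-sup bound by combining a divergence-realizing field vanishing on $\Gamma$ with a lifting of the $H^{\frac12}(\Gamma)$ Riesz representative of $p_D|_\Gamma$, all weighted by $(2\mu)^{-1}$. The only difference is cosmetic: the paper takes the lifting $\vec{v}^{p_D}$ to be divergence-free so the two contributions decouple exactly with unit coefficients, whereas you allow a general bounded lifting and absorb the resulting cross-term $(\div \vels_1, p_S)_{\Omega_S}$ by Young's inequality with a small coefficient $\alpha_1$ — both yield a parameter-independent $\zeta_0$.
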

\begin{proof}
    We follow the assumptions of \cref{thm: abstract wellposedness}. First, the properties \eqref{def: A norm} and \eqref{def: C norm} are immediately fulfilled. 
    Next, the continuity of $B$ is shown by the following calculation, utilizing the Cauchy-Schwarz inequality and a trace inequality:
    \begin{align*}
      \langle B \vels, (p_S, p_D) \rangle &=
      - (\nabla \cdot \vels, p_S)_{\Omega_S} + 
      (\uonormal \cdot \vels, p_D)_{\Gamma} \\
      &\le \| \nabla \cdot \vels \|_{\Omega_S} \| p_S \|_{\Omega_S} + 
      \| \uonormal \cdot \vels \|_{\frac12, \Gamma} \| p_D \|_{-\frac12, \Gamma} \\
      &\lesssim \| \symgrad{\vels} \|_{\Omega_S} 
      \left(\| p_S \|_{\Omega_S} + \| p_D \|_{-\frac12, \Gamma} \right) \\
      &\lesssim (2\mu)^{\frac12} \| \symgrad{\vels} \|_{\Omega_S} 
      (2\mu)^{-\frac12} \left(\| p_S \|_{\Omega_S}^2 + \| p_D \|_{-\frac12, \Gamma}^2 \right)^{\frac12} \\
      &= \| \vels \|_A | (p_S, p_D) |_B.
    \end{align*}

    The inf-sup condition of $B$ is considered next. Let $(p_S, p_D)$ be given. Let $\vec{v}^{p_S} \in \vec{H}^1(\Omega_S)$ be constructed, using the Stokes inf-sup condition, such that
    \begin{subequations}\label{eq: test function vps}
    \begin{align} 
            \vec{v}^{p_S}|_{\Gamma} &= 0, &
            \nabla \cdot \vec{v}^{p_S} &= -p_S, \\
            \| \symgrad{\vec{v}^{p_S}} \|_{\Omega_S} 
            &\lesssim \| p_S \|_{\Omega_S}.
        \end{align}
        \end{subequations}
    On the other hand, let $\phi \in H^{\frac12}(\Gamma)$ be the Riesz representative of $p_D|_\Gamma \in H^{-\frac12}(\Gamma)$. We then define $\vec{v}^{p_D} \in \vec{H}^1(\Omega_S)$ as the bounded extension that satisfies
    \begin{subequations} \label{eq: test function vpd}
    \begin{align}
        \vec{v}^{p_D}|_{\Gamma} &= \phi \uonormal, &
        \nabla \cdot \vec{v}^{p_D} &= 0, \\
        \| \symgrad{\vec{v}^{p_D}} \|_{\Omega_S} 
        &\lesssim \| \phi \|_{\frac12, \Gamma} 
        = \| p_D \|_{-\frac12, \Gamma}.
    \end{align}
    \end{subequations}
    We are now ready to set the test function $\velstest := (2\mu)^{-1} (\vec{v}^{p_S} + \vec{v}^{p_D})$. Noting that $\utangent \cdot \velstest = 0$ on $\Gamma$, this function satisfies
    \begin{subequations}
    \begin{align}
        \langle B \velstest, (p_S, p_D) \rangle
        &= - (2\mu)^{-1}(\nabla \cdot \vec{v}^{p_S}, p_S)_{\Omega_D} + 
        (2\mu)^{-1}(\uonormal \cdot \vec{v}^{p_D}, p_D)_{\Gamma} \nonumber\\
        &= (2\mu)^{-1} \| p_S \|_{\Omega_S}^2
        + (2\mu)^{-1} \| p_D \|_{-\frac12, \Gamma}^2 \nonumber\\
        &= | (p_S, p_D) |_B^2, \\
        \| \velstest \|_A
        &= (2\mu)^{\frac12} \| \symgrad{(2\mu)^{-1}(\vec{v}^{p_S} + \vec{v}^{p_D})} \|_{\Omega_S} \nonumber\\
        &\le (2\mu)^{-\frac12} \left( \| \symgrad{\vec{v}^{p_S}} \|_{\Omega_S}
        + \| \symgrad{\vec{v}^{p_D}} \|_{\Omega_S} \right) \nonumber\\
        &\lesssim (2\mu)^{-\frac12} \left( \| p_S \|_{\Omega_S}
        + \| p_D \|_{-\frac12, \Gamma} \right) \nonumber\\
        &\lesssim | (p_S, p_D) |_B.
    \end{align}
    \end{subequations}
    Hence, condition \eqref{eq: inf-sup B} is fulfilled.

    Finally, it is straightforward to verify that \eqref{def: e-norm} is a norm on $Q$ and thus the assumptions of \cref{thm: abstract wellposedness} are fulfilled.
\end{proof}

Following operator preconditioning \cite{Mardal2011}, and using the well-posedness
result of \cref{thm: Taylor Hood well-posed}, a preconditioner
for the Stokes-Darcy in the Trace formulation \eqref{eq:ds_weak} is the Riesz
map with respect to the inner product inducing the norms \eqref{eq:th_precond_norm},
that is, the block diagonal operator
%
\begin{equation}\label{eq:daq_precond}
\mathcal{B}^\text{Tr} := \begin{bmatrix}
  -\nabla\cdot(2\mu\symgradop) + \betatangent\tracetangentprime\tracetangent & & \\
   & (2\mu)^{-1}I& \\
  & & -\kovermu\Delta + (2\mu)^{-1}\left(-\Delta_{\Gamma}\right)^{-1/2}\\
\end{bmatrix}^{-1}.
\end{equation}
%
Here, the subscript $\Gamma$ signifies that the fractional operator acts
on the interface. We demonstrate numerically, robustness of the preconditioner \eqref{eq:daq_precond}
using both $H^1$-conforming and non-conforming Stokes-Darcy-stable elements in \cref{sec:numeric}. Here, we continue with the remaining two formulations
concerning cell-centered finite volume schemes and lowest-order non-conforming finite element schemes.

\subsection{The Lagrange multiplier formulation}\label{sec:ds_LM}
Variational problem \eqref{eq:dsLM_weak} defines an operator $\mathcal{A}^\text{La}$ on $\bV_{S}\times \left(Q_{S}\times Q_{D}\times\Lambda\right)$
\begin{equation}\label{eq:dsLM_op}
\mathcal{A}^\text{La} := \begin{bmatrix}
  -\nabla\cdot( 2\mu\symgradop ) + \betatangent\tracetangentprime\tracetangent & \nabla & & \tracenormalprime\\
  -\nabla\cdot & & & \\
  & & \kovermu\Delta - \betanormal^{-1} T^{\prime}T & \betanormal^{-1} T^{\prime}\\
  \tracenormal & & \betanormal^{-1} T & -\betanormal^{-1} I
\end{bmatrix},
\end{equation}
where $T: Q_{D}\rightarrow \Lambda'$ is a trace/restriction operator
for the Darcy pressure space. We observe that the lower $2\times 2$ block forms a discretization
of the Laplacian in terms of the interior Darcy pressure and the interface pressure.
In this sense, \eqref{eq:dsLM_op} is similar to \eqref{eq:daq_op}.

We note that the operator \eqref{eq:dsLM_op} also fits template \eqref{eq: perturbed saddle point} with the operators given by
\begin{align*}
  \langle A \vels, \velstest \rangle &:= 
  (2\mu\symgrad{\vels}, \symgrad{\velstest})_{\Omega_S}
  + \betatangent(\utangent \cdot \vels, \utangent \cdot \velstest)_{\Gamma}, \\
  \langle B \vels, (q_S, q_D, q_\Gamma) \rangle &:=
  - (\nabla\cdot \vels, q_S)_{\Omega_S}
  + (\uonormal \cdot \vels, q_\Gamma)_{\Gamma}, \\
  \langle C (p_S, p_D, p_\Gamma), (q_S, q_D, q_\Gamma) \rangle &:=
  (\kovermu \nabla p_D, \nabla q_D)_{\Omega_D}
  + (\betanormal^{-1} (p_D - p_\Gamma), (q_D - q_\Gamma))_{\Gamma}.
\end{align*}
These operators lead us to the following norms
\begin{subequations} \label{eqs: norms LM}
\begin{align}
    \| \vels \|_A^2 &:= 2\mu \| \symgrad{\vels} \|_{\Omega_S}^2
    + \betatangent \| \utangent \cdot \vels \|_\Gamma^2, \\
    | (p_S, p_D, p_\Gamma) |_B^2 &:= 
    (2\mu)^{-1} \| p_S \|_{\Omega_S}^2
    + (2\mu)^{-1} \| p_\Gamma \|_{-\frac12, \Gamma}^2, \\
    | (p_S, p_D, p_\Gamma) |_C^2 &:= \kovermu \| \nabla p_D \|_{\Omega_D}^2 
    + \betanormal^{-1} \| p_D - p_\Gamma \|_\Gamma^2.
    \end{align}
\end{subequations}

\begin{theorem}\label{thm:dsLM}
  Problem \eqref{eq:dsLM_op} is well-posed in $V \times Q$ endowed with the energy norm \eqref{def: e-norm} formed by \eqref{eqs: norms LM}.
\end{theorem}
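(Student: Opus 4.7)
The plan is to apply the abstract framework \cref{thm: abstract wellposedness} to the block operator $\mathcal{A}^\text{La}$ equipped with the (semi-)norms in \eqref{eqs: norms LM}. The overall argument closely mirrors that of \cref{thm: Taylor Hood well-posed}, since the only substantive change is that the flux coupling on $\Gamma$ now involves the interface multiplier $p_\Gamma$ instead of the trace $p_D|_\Gamma$. As a consequence, the Darcy--interface mismatch $\betanormal^{-1}\|p_D - p_\Gamma\|_\Gamma^2$ enters the semi-norm $|\cdot|_C$ rather than disturbing the inf-sup structure, and the operator $B$ still couples $\vels$ only with the two pressure components that carry the $|\cdot|_B$ weight.

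First I would discharge the algebraic prerequisites. The operator $A$ is identical to that of the Trace formulation and hence defines an inner product on $\bV_S$ with $\|\cdot\|_A$ as in \eqref{def: A norm}. The operator $C$ is the sum of two symmetric positive semi-definite forms, the Darcy Dirichlet form and the interfacial penalty, so it defines a semi-inner product satisfying \eqref{def: C norm}. Continuity of $B$ follows from Cauchy--Schwarz and the trace inequality on $\vels$ exactly as in \cref{thm: Taylor Hood well-posed}; no trace argument on the multiplier is needed since $p_\Gamma$ already lives on $\Gamma$.

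The inf-sup condition \eqref{eq: inf-sup B} is verified by reusing the construction from \eqref{eq: test function vps}--\eqref{eq: test function vpd}, merely replacing $p_D|_\Gamma$ by $p_\Gamma$ throughout. Given $(p_S, p_D, p_\Gamma) \in Q$, let $\vec{v}^{p_S} \in \bH^1(\Omega_S)$ be the Stokes inf-sup lift satisfying \eqref{eq: test function vps}, let $\phi \in H^{\frac12}(\Gamma)$ be the Riesz representative of $p_\Gamma \in H^{-\frac12}(\Gamma)$, and let $\vec{v}^{p_\Gamma}$ be a divergence-free, zero-tangent bounded extension of $\phi \uonormal$ into $\bH^1(\Omega_S)$. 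Setting $\velstest := (2\mu)^{-1}(\vec{v}^{p_S} + \vec{v}^{p_\Gamma})$, the same chain of inequalities as in the Trace proof yields $\langle B\velstest, (p_S, p_D, p_\Gamma)\rangle = |(p_S, p_D, p_\Gamma)|_B^2$ and $\|\velstest\|_A \lesssim |(p_S, p_D, p_\Gamma)|_B$, with constants independent of $\mu$, $\kovermu$, $\betatangent$, and $\betanormal$.

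The step I expect to be the main obstacle is verifying that \eqref{def: e-norm} is a proper norm, since $p_\Gamma$ appears in both $|\cdot|_B$ (at $H^{-\frac12}$ strength) and $|\cdot|_C$ (through the $L^2$ penalty), while $p_D$ is controlled only indirectly. If $\enorm{(\vels, p_S, p_D, p_\Gamma)} = 0$, then $\|\vels\|_A = 0$ gives $\vels = 0$ by Korn on $\bV_S$, and $|\cdot|_B = 0$ gives $p_S = 0$ and $p_\Gamma = 0$. The vanishing of $|\cdot|_C$ then forces $\nabla p_D \equiv 0$ on $\Omega_D$ together with $p_D = p_\Gamma = 0$ on $\Gamma$; either this trace condition or the homogeneous Dirichlet condition built into $Q_D$ on $\Gamma_D^p$ pins the remaining constant value to zero, so $p_D \equiv 0$. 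All hypotheses of \cref{thm: abstract wellposedness} are then met and the well-posedness conclusion follows.
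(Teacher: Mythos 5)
Your proposal is correct and follows essentially the same route as the paper: the paper's proof likewise notes that the $A$ and $C$ assumptions are immediate and that the inf-sup condition for $B$ is exactly the one established for the Trace formulation with $p_D|_\Gamma$ replaced by $p_\Gamma$, then invokes the abstract theorem. Your additional check that \eqref{def: e-norm} is a proper norm (using the interfacial penalty in $|\cdot|_C$ to pin down the constant mode of $p_D$) is a detail the paper leaves implicit, and your argument for it is sound.
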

\begin{proof}
  Assumptions \eqref{def: A norm} and \eqref{def: C norm} are again immediate.
  Assumption \eqref{eq: inf-sup B} was proven in \cref{thm: Taylor Hood well-posed}
  (with $p_D|_{\Gamma}$ substituted for $p_\Gamma$). Then, \cref{thm: abstract wellposedness}
  provides the result.
\end{proof}

Following \cref{thm:dsLM}, a preconditioner for problem \eqref{eq:dsLM_weak}
reads
\begin{equation}\label{eq:dsLM_precond}
\mathcal{B}^\text{La} := \begin{bmatrix}
  \scriptstyle -\nabla\cdot(2\mu\symgradop) + \betatangent\tracetangentprime\tracetangent &  & & \\
   & \scriptstyle (2\mu)^{-1} I & & \\
   & & \scriptstyle -\kovermu\Delta +\betanormal^{-1} T^\prime T & \scriptstyle -\betanormal^{-1} T'\\
   & & \scriptstyle -\betanormal^{-1} T & \scriptstyle \betanormal^{-1} I_{\Gamma} + (2\mu)^{-1}(-\Delta_{\Gamma})^{-1/2}
\end{bmatrix}^{-1}.
\end{equation}

Finally, we consider the third variational form established by eliminating the
Lagrange multiplier on the coupling interface.

\subsection{The Robin formulation}\label{sec:ds_robin}
We observe that problem \eqref{eq:ds_robin_weak} is given in terms
of the operator $\mathcal{A}^\text{Ro}$ on $\bV_{S}\times \left(Q_{S}\times Q_{D}\right)$
\begin{equation}\label{eq:fvm_op}
\mathcal{A}^\text{Ro} := \begin{bmatrix}
  -\div (2\mu\symgradop) 
  + \betatangent \tracetangentprime\tracetangent 
  + \betanormal \tracenormalprime\tracenormal 
  & \nabla & \tracenormalprime\\
  -\nabla\cdot & & \\
  \tracenormal & & \kovermu\Delta
\end{bmatrix}.
\end{equation}
%
Note again that \eqref{eq:fvm_op} has the structure \eqref{eq: perturbed saddle point} with the operators given by
\begin{align*}
  \langle A \vels, \velstest \rangle &:= 
  (2\mu\symgrad{\vels}, \symgrad{\velstest})_{\Omega_S} \nonumber \\
    &\quad + \betatangent(\utangent \cdot \vels, \utangent \cdot \velstest)_{\Gamma} 
    + \betanormal(\uonormal \cdot \vels, \uonormal \cdot \velstest)_{\Gamma}, \\
  \langle B \vels, (q_S, q_D) \rangle &:=
  - (\nabla\cdot \vels, q_S)_{\Omega_S}
  + (\uonormal \cdot \vels, q_D)_{\Gamma}, \\
  \langle C (p_S, p_D), (q_S, q_D) \rangle &:=
  (\kovermu \nabla p_D, \nabla q_D)_{\Omega_D}.
\end{align*}
In the framework of \cref{thm: abstract wellposedness}, we identify the following norms:
\begin{subequations} \label{eqs: norms fvm}
\begin{align}
    \| \vels \|_A^2 &:= 2\mu \| \symgrad{\vels} \|_{\Omega_S}^2
    + \betatangent \| \utangent \cdot \vels \|_\Gamma^2
    + \betanormal \| \uonormal \cdot \vels \|_\Gamma^2, \\
    | (p_S, p_D) |_B^2 &:= 
    (2\mu)^{-1} \| p_S \|_{\Omega_S}^2
    + \| p_D \|_{(2\mu)^{-\frac12}H^{-\frac12}(\Gamma) + \betanormal^{-\frac12}L^2(\Gamma)}^2, \\
    | (p_S, p_D) |_C^2 &:= \kovermu \| \nabla p_D \|_{\Omega_D}^2.
    \end{align}
\end{subequations}

We remark that the control on the pressure variable is weakened in comparison with \eqref{eqs: norms LM}. This is a direct result from the fact that $\vels$ is now in a smaller space (with a stronger norm).

\begin{theorem}\label{thm:ds_robin}
  Problem \eqref{eq:fvm_op} is well-posed in $V \times Q$ endowed with the energy norm formed by \eqref{eqs: norms fvm}.
\end{theorem}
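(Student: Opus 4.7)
The plan is to verify the hypotheses of \cref{thm: abstract wellposedness} with the bilinear forms and norms introduced for the Robin formulation in \eqref{eqs: norms fvm}. The conditions \eqref{def: A norm} and \eqref{def: C norm} are immediate since the added Robin term $\betanormal(\uonormal \cdot \vels, \uonormal \cdot \velstest)_\Gamma$ is symmetric and positive semi-definite, and the fact that $\enorm{\cdot}$ is a proper norm follows as in the proof of \cref{thm: Taylor Hood well-posed}. The only substantively new step is the inf-sup condition \eqref{eq: inf-sup B} for $B$, since the pressure norm on $p_D$ is now taken in the sum space $(2\mu)^{-\frac12}H^{-\frac12}(\Gamma) + \betanormal^{-\frac12}L^2(\Gamma)$ rather than in $(2\mu)^{-\frac12}H^{-\frac12}(\Gamma)$ alone.

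The key observation is a duality identity: by the relations recalled in \cref{sub: notation}, the dual of this sum space is the intersection $(2\mu)^{\frac12}H^{\frac12}(\Gamma) \cap \betanormal^{\frac12}L^2(\Gamma)$, whose squared norm equals $2\mu\|w\|_{\frac12,\Gamma}^2 + \betanormal\|w\|_\Gamma^2$. At the same time, the modified $A$-norm satisfies
\begin{align*}
    \|\vels\|_A^2 \gtrsim 2\mu \|\uonormal \cdot \vels\|_{\frac12,\Gamma}^2 + \betanormal \|\uonormal \cdot \vels\|_\Gamma^2
\end{align*}
by the trace inequality. Thus the $A$-norm controls precisely the intersection norm of $\uonormal\cdot\vels$ that is dual to the sum space in which $p_D$ is measured. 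Continuity of $B$ then follows by splitting $-(\nabla\cdot\vels,p_S)_{\Omega_S}$ bounded as in \cref{thm: Taylor Hood well-posed} and applying Cauchy-Schwarz in the duality pairing $\langle\uonormal \cdot \vels, p_D\rangle_\Gamma$ between intersection and sum spaces.

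For the inf-sup, I would mirror the construction in the proof of \cref{thm: Taylor Hood well-posed}. Given $(p_S, p_D)$, let $\vec{v}^{p_S}$ be chosen exactly as in \eqref{eq: test function vps}, which already yields $\utangent\cdot\vec{v}^{p_S}|_\Gamma = \uonormal\cdot\vec{v}^{p_S}|_\Gamma = 0$ and hence no contribution to the Robin term. For the interface part, instead of the pure $H^{\frac12}$ Riesz representative used in \eqref{eq: test function vpd}, I would take $\phi \in H^{\frac12}(\Gamma)$ to be the maximizer in the duality
\begin{align*}
    \|p_D\|_{(2\mu)^{-\frac12}H^{-\frac12}(\Gamma)+\betanormal^{-\frac12}L^2(\Gamma)}
    = \sup_{\phi}
    \frac{\langle \phi, p_D \rangle_\Gamma}
    {\bigl(2\mu\|\phi\|_{\frac12,\Gamma}^2 + \betanormal\|\phi\|_\Gamma^2\bigr)^{\frac12}},
\end{align*}
and construct a divergence-free extension $\vec{v}^{p_D}$ with $\vec{v}^{p_D}|_\Gamma = \phi\uonormal$ and $\|\symgrad{\vec{v}^{p_D}}\|_{\Omega_S} \lesssim \|\phi\|_{\frac12,\Gamma}$. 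Setting $\velstest := (2\mu)^{-1}\vec{v}^{p_S} + c\,\vec{v}^{p_D}$ for a suitable scaling constant $c$ depending on $2\mu$ and $\betanormal$, one obtains both $\langle B\velstest,(p_S,p_D)\rangle \gtrsim |(p_S,p_D)|_B^2$ and $\|\velstest\|_A \lesssim |(p_S,p_D)|_B$.

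The main obstacle I anticipate is bookkeeping the parameter dependencies when combining the two building blocks: the $\vec{v}^{p_S}$ piece must be scaled to land in the $(2\mu)^{-1}$-weighted pressure norm, while the $\vec{v}^{p_D}$ piece must be scaled so that the contributions $2\mu\|\symgrad{\vec{v}^{p_D}}\|^2$ and $\betanormal\|\uonormal\cdot\vec{v}^{p_D}\|_\Gamma^2$ to $\|\velstest\|_A^2$ balance the two summands of the sum-space norm of $p_D$. Showing that a single extension $\vec{v}^{p_D}$ works uniformly across the regimes where either the $H^{-\frac12}$ or the $L^2$ contribution dominates is the delicate point, but it is precisely enabled by the duality identity above together with a standard Bogovski\u{\i}-type divergence-free extension from $H^{\frac12}(\Gamma)$ into $\vec{H}^1(\Omega_S)$.
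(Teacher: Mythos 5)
Your proposal is correct and follows essentially the same route as the paper: verifying \eqref{def: A norm} and \eqref{def: C norm} directly, using the duality between the intersection space $(2\mu)^{\frac12}H^{\frac12}(\Gamma) \cap \betanormal^{\frac12}L^2(\Gamma)$ and the sum space for continuity of $B$, and proving the inf-sup condition with the test function $(2\mu)^{-1}\vec{v}^{p_S} + \vec{v}^{p_D}$ where $\vec{v}^{p_D}$ is a divergence-free extension of the Riesz representative of $p_D|_\Gamma$ taken in the \emph{weighted intersection} space rather than in $H^{\frac12}(\Gamma)$ alone. The scaling constant $c$ you leave undetermined can simply be taken equal to $1$, since the Riesz representative in the weighted space already carries the correct normalization so that $\langle \phi, p_D\rangle_\Gamma = \|p_D\|_{\Lambda'}^2$, which is exactly how the paper resolves the parameter bookkeeping you flag as the delicate point.
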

\begin{proof}
  Assumptions \eqref{def: A norm} and \eqref{def: C norm} follow immediately. We continue with the bounds on $B$, starting with continuity:
  \begin{align*}
    \langle B \vels, (q_S, q_D) \rangle &=
    - (\nabla\cdot \vels, q_S)_{\Omega_S}
    + (\uonormal \cdot \vels, q_D)_{\Gamma} \\
    &\le \| \symgrad{\vels} \|_{(2\mu)^{\frac12} L^2(\Omega_S)} \| q_S \|_{(2\mu)^{-\frac12} L^2(\Omega_S)} \\
    &\quad + \| \uonormal \cdot \vels \|_{(2\mu)^{\frac12}H^{\frac12}(\Gamma) \cap \betanormal^{\frac12}L^2(\Gamma)} 
    \| q_D \|_{(2\mu)^{-\frac12}H^{-\frac12}(\Gamma) + \betanormal^{-\frac12}L^2(\Gamma)} \\
    &\lesssim \| \vels \|_A + | (q_S, q_D) |_B,
  \end{align*}
  in which we used that $(c X)' = c^{-1} X'$ for $c > 0$ and $(X \cap Y)' = X + Y$, cf.~\cref{sub: notation}.

  Secondly, we prove the inf-sup condition for which we follow the same approach as in \cref{thm: Taylor Hood well-posed}. Let $(p_S, p_D)$ be given with bounded $B$-norm and let $\vec{v}^{p_S}$ satisfy \eqref{eq: test function vps}. For notational convenience, we define
  \begin{align}
    \Lambda &:= (2\mu)^{\frac12}H^{\frac12}(\Gamma) \cap \betanormal^{\frac12}L^2(\Gamma), &
    \Lambda' &:= (2\mu)^{-\frac12}H^{-\frac12}(\Gamma) + \betanormal^{-\frac12}L^2(\Gamma).
  \end{align}
  Now, let $\phi \in \Lambda$ be the Riesz representative of $p_D|_\Gamma \in \Lambda'$. Since $\Lambda \subseteq H^{\frac12}(\Gamma)$, we can define $\vec{v}^{p_D} \in \bH^1(\Omega_S)$ according to \eqref{eq: test function vpd}. This function satisfies the bound 
  $\| \symgrad{\vec{v}^{p_D}} \|_{\Omega_S}
      \lesssim 
      \| \phi \|_{\frac12, \Gamma}$ and we obtain
  \begin{align*}
      \| (2\mu)^{\frac12} \symgrad{\vec{v}^{p_D}} \|_{\Omega_S}^2
      +
      \| \betanormal^{\frac12} \uonormal \cdot \vec{v}^{p_D} \|_{\Gamma}^2 
      &\lesssim 
      \| (2\mu)^{\frac12} \phi \|_{\frac12, \Gamma}^2
      + \| \betanormal^{\frac12} \phi \|_{\Gamma}^2 \\
      &= \| \phi \|_{\Lambda}^2
      = \| p_D \|_{\Lambda'}^2.
  \end{align*}
    
  Finally, we define the test function $\vec{v} = (2\mu)^{-1} \vec{v}^{p_S} + \vec{v}^{p_D}$ and deduce
    \begin{subequations} \label{eqs: v properties}
    \begin{align}
        \langle B \velstest, (p_S, p_D) \rangle
        &= - (2\mu)^{-1}(\nabla \cdot \vec{v}^{p_S}, p_S)_{\Omega_D} + 
        (\uonormal \cdot \vec{v}^{p_D}, p_D)_{\Gamma} \nonumber\\
        &= (2\mu)^{-1} \| p_S \|_{\Omega_S}^2
        + \| p_D \|_{\Lambda'}^2 \nonumber\\
        &= | (p_S, p_D) |_B^2, \\
        \| \velstest \|_A^2
        &= (2\mu) \| \symgrad{(2\mu)^{-1}\vec{v}^{p_S} + \vec{v}^{p_D}} \|_{\Omega_S}^2
        + \betanormal \| \uonormal \cdot \vec{v}^{p_D} \|_\Gamma^2 \nonumber\\
        &\lesssim (2\mu)^{-1} \| \symgrad{\vec{v}^{p_S}} \|_{\Omega_S}^2
        + \| (2\mu)^{\frac12} \symgrad{\vec{v}^{p_D}} \|_{\Omega_S}^2
        + \| \betanormal^{\frac12} \uonormal \cdot \vec{v}^{p_D} \|_{\Gamma}^2 \nonumber\\
        &\lesssim (2\mu)^{-\frac12} \| p_S \|_{\Omega_S}^2
        + \| p_D \|_{\Lambda'}^2 \nonumber\\
        &= | (p_S, p_D) |_B^2.
    \end{align}
    \end{subequations}
    Now, \eqref{eqs: v properties} implies that assumption \eqref{eq: inf-sup B} is fulfilled and the result follows by \cref{thm: abstract wellposedness}.
\end{proof}
\Cref{thm:ds_robin} leads us to the preconditioner for the third formulation:
\begin{equation}\label{eq:ds_robin_precond}
\mathcal{B}^\text{Ro} = \begin{bmatrix}
  \left(\substack{-\nabla\cdot(2\mu\symgradop) 
  + \betatangent \tracetangentprime\tracetangent \\
  + \betanormal \tracenormalprime\tracenormal }\right)^{-1} & & \\
   & \left((2\mu)^{-1} I \right)^{-1}& \\
  & & \left(\substack{\left(-\kovermu\Delta + \betanormal^{-1} I_\Gamma \right)^{-1} \\+
  \left(-\kovermu\Delta + (2\mu)^{-1}\left(-\Delta_{\Gamma}\right)^{-\frac12} \right)^{-1}} \right)\\
\end{bmatrix}.
\end{equation}
Note that the ($1\times1$) Darcy pressure block of the preconditioner contains a sum of two inverse operators.
This construction is typical in preconditioning sums of spaces, as discussed in \cite{baerland2020observation}.

\section{Numerical experiments}\label{sec:numeric}


For finite element and finite volume discretizations, we let $\Omega_{i, h}$, $i=D, S$
($h$ being the characteristic discretization length) denote
the meshes of $\Omega_D$, $\Omega_S$ that conform to $\Gamma$ in the sense that every
facet $F$ on the interface $\Gamma$ satisfies $F = \partial K_D \cap \partial K_S$ for
some unique cell pair $K_D \in{\Omega}_{D, h}$ and $K_S \in{\Omega}_{S, h}$. The mesh
of the interface (consisting of facets $F$) is denoted by $\Gamma_h$.

Unless stated otherwise, the geometry setup and boundary
data of \cref{ex:naive} are used, i.e.
$\Omega_S=\left[0, 1\right]\times \left[1, 2\right]$, $\Omega_D=\left[0, 1\right]\times \left[0, 1\right]$ with
the source terms defined in \eqref{eq:mms_rhs} and the top edge of $\Omega_S$ and the
bottom edge of $\Omega_D$ designated as Dirichlet boundaries $\Gamma^{\vec{u}}_S$, $\Gamma^p_D$, respectively.
On the remaining boundaries, Neumann conditions are given. The non-homogeneous boundary data
matches \eqref{eq:mms}.
In all examples, the Krylov solver terminates when the preconditioned residual
norm is reduced by a factor $10^8$.

All numerical tests are implemented using the scientific software frameworks
FEniCS\textsubscript{ii}~\cite{fenicsii} (FEM) and DuMu${}^\text{x}$/DUNE~\cite{Kochetal2020Dumux,dunegrid2} (FVM),
where we use PETSc~\cite{Petsc-user-ref}, SLEPc~\cite{SLEPc2005} (FEM) and
Eigen~\cite{Eigen3}, Spectra~\cite{Spectra2019} (FVM) for solving exact and approximate
generalized eigenvalue problems (discrete fractional Laplacian, condition numbers).
The preconditioners are implemented within the abstract linear solver frameworks of
PETSc (FEM) and dune-istl~\cite{duneistl} (FVM).

Since the discretization of the preconditioners is not straightforward
due to the interfacial contributions, we first provide some details
regarding their construction in \cref{sec:discrete_precond}. To demonstrate robustness of the
proposed preconditioners, 
we conduct numerical experiments with large parameter ranges motivated by the practical applications and dimensional analysis discussed in \cref{sec:params}. 
Numerical results are finally presented in \cref{sec:param_sweep}.

\subsection{Discrete preconditioners}\label{sec:discrete_precond}
The (only) non-standard component common to all our Stokes-Darcy preconditioners
is the fractional operator $\mu^{-1}(-\Delta_{\Gamma})^{-1/2}$. Following \cite{kuchta2016preconditioners},
we consider here the approximation based on the spectral definition which
requires solution of the following generalized eigenvalue problem in a discrete space
$V_h=V_h(\Gamma_h)$, $n=\operatorname{dim}(V_h)$: For $1\leq i \leq n$ find
$(u_i, \lambda_i)\in V_h\times\mathbb{R}$ such that
\begin{equation}\label{eq:eigv}
(u_i, v)_{\mu^{-\frac12}H^1(\Gamma)}=\lambda_i (u_i, v)_{\mu^{-\frac12}L^2(\Gamma)},\quad\forall v\in V_h,
\end{equation}
with the orthogonality condition $(u_i, u_j)_{\mu^{-\frac12}L^2(\Gamma)}=\delta_{ij}$. Then, we let
\begin{equation}\label{eq:fract_laplace}
\langle \mu^{-1}(-\Delta_{\Gamma})^{-1/2}u, v \rangle := \sum_{i}\lambda^{-1/2}_i(u_i, u)_{\mu^{-\frac12}L^2(\Gamma)}(u_i, v)_{\mu^{-\frac12}L^2(\Gamma)}, \quad u, v\in V_h.
\end{equation}
We note that \eqref{eq:eigv} is related to the weak formulation of $\mu^{-1}(-\Delta_{\Gamma}+I_{\Gamma})u=\mu^{-1}\lambda u$ in $\Gamma$
with Neumann boundary conditions\footnote{The actual boundary data is irrelevant as it does not enter the operator.} on the boundary $\partial \Gamma$.

Introducing matrices
$\vec{A}_h$ (discrete $\mu^{-1}(-\Delta_{\Gamma}+I_{\Gamma})$ operator), $\vec{M}_h$ (discrete $\mu^{-1}I_{\Gamma}$ operator),
the matrix representation of \eqref{eq:fract_laplace} (with respect
to the basis of $V_h$) reads
\[
\vec{M}_h\vec{U}_h\vec{E}^{-1/2}_h\vec{U}^T_h\vec{M}^T_h,\text{ where } \vec{A}_h\vec{U}_h=\vec{M}_h\vec{U}_h\vec{E}_h\text{ and } \vec{U}^T_h\vec{M}_h\vec{U}_h=\ident_h.
\]
That is, $\vec{E}_h, \vec{U}_h \in\mathbb{R}^{n\times n}$ are the solutions of the eigenvalue problem \eqref{eq:eigv}
with the eigenvalues forming the entries of the diagonal matrix $\vec{E}_h$ and columns of $\vec{U}_h$ being the 
$\vec{M}_h$-orthonormal eigenvectors. 
We remark that for cell-centered finite volume
and $\Pnot$ finite element discretizations $\vec{M}_h$  is a diagonal matrix.

While the eigenvalue problem makes the construction inefficient for large scale
applications, it is suitable for our robustness investigations where, in particular,
we are interested in exact preconditioners.
For large scale applications, scalable realizations of the Darcy pressure preconditioners
in \eqref{eq:daq_precond}, \eqref{eq:dsLM_precond} and \eqref{eq:ds_robin_precond} are,
to the best of the authors' knowledge, yet to be established. However,
efficient solvers for the interfacial component alone, i.e $\mu^{-1}(-\Delta_{\Gamma})^{-1/2}$, 
are known, e.g. \cite{Oswald2007MultilevelNF, fuhrer2021multilevel, bramble2000computational, baerland2019multigrid, stevenson2021uniform}.

Concerning the discretization of \eqref{eq:eigv}, we note that we use the full $H^1$-inner
product, since in \cref{ex:naive} and as assumed in \cref{sec:problem}, the interface $\Gamma$ intersects
Neumann boundaries
(see \cite{galvis2007non, holter2020robust} for discussion of multiplier spaces
in relation to the spaces/boundary conditions on the adjacent subproblems).
For the case of $\Gamma$ intersecting boudaries with Dirichlet conditions, we refer to \cref{sec:meet_dirichlet}.

Finally, let us note that while in the multiplier formulation \eqref{eq:dsLM_precond} the trace
space $V_h$ for \eqref{eq:eigv} is explicit, i.e. $V_h=\Lambda_h$, this
is not the case for the preconditioners for the Trace
and Robin formulations, \eqref{eq:daq_precond} and \eqref{eq:ds_robin_precond}. More precisely, to compute the approximation of
\begin{equation*}
\left(-\kovermu\Delta + (2\mu)^{-1}\left(-\Delta_{\Gamma}\right)^{-1/2} \right)^{-1}
\end{equation*}
a mapping $\Pi_{\Gamma}:Q_{D, h}\rightarrow V_h$ is required. In the following,
$\Pi_{\Gamma}$ is defined as an interpolation operator to $V_h$, where for 
$\vec{P}_2$-$P_1$-$P_2$ discretization $V_h$ is constructed with $P_2$ elements.
When using the Crouzeix-Raviart element in a $\vec{C}\vec{R}_1$-$\Pnot$-$\Pnot$ discretization or the cell-centered finite volume discretization, the space $V_h$ is constructed using $\Pnot$ elements.

We remark that the trace space cannot be chosen arbitrarily. In particular,
for the Taylor-Hood element e.g. the choice of $\Pnot$ for $V_h$ results in
parameter sensitivity of the Trace formulation \eqref{eq:ds_weak} with the
preconditioner \eqref{eq:daq_precond}.

\subsection{Relevant parameter ranges}\label{sec:params}
Having specified the discretizations of preconditioners 
we identify next the parameter regimes for which robustness
is investigated in numerical experiments. We chose the parameter ranges based on a scaling analysis and several real-life applications.

Let $U_0$ be the characteristic Stokes velocity magnitude, $\Delta P_0$ the characteristic
pressure difference in the Stokes domain, and $L_0$ the characteristic length scale.
Introducing the dimensionless quantities $\vels = U_0 \velstilde$, $p_i = \Delta P_0 \tilde{p}_i$, $i=S, D$
and $\nabla(\cdot) = L_0^{-1}\tilde{\nabla}(\cdot)$ we arrive at the re-scaled 
Stokes-Darcy system 
\begin{equation}\label{eq:ds_dless}
\begin{aligned}
\tilde{\nabla}\cdot\left( \text{Re}^{-1}\text{Eu}^{-1} \tilde{\vec{\epsilon}}(\velstilde) + \tilde{p}_S \ident  \right) &= 0 & \text{in} \quad\tilde{\Omega}_S, \\
\tilde{\nabla}\cdot\left( \velstilde \right) &= 0 & \text{in} \quad\tilde{\Omega}_S, \\
\tilde{\nabla}\cdot( - \text{Re}\text{Eu}\text{Da} \tilde{\nabla} \tilde{p}_D ) &= 0 & \text{in} \quad\tilde{\Omega}_D,
\end{aligned}
\end{equation}
with the coupling conditions on $\Gamma$,
\begin{equation}\label{eq:interface_dles}
  \begin{aligned}
\utangent \cdot \tilde{\vec{\epsilon}}(\velstilde) \cdot \uonormal  + \alpha \text{Da}^{-1/2} \utangent \cdot\velstilde &= \vec{0},\\    
\uonormal \cdot \left( \text{Re}^{-1}\text{Eu}^{-1} \tilde{\vec{\epsilon}}(\velstilde) + \tilde{p}_S \ident  \right) \cdot \uonormal + \tilde{p}_D &= 0, \\
\velstilde \cdot \uonormal + \text{Re}\text{Eu}\text{Da} \tilde{\nabla} \tilde{p}_D \cdot \uonormal &= 0. \\
\end{aligned}
\end{equation}
Here we introduced the dimensionless velocity gradient $\tilde{\vec{\epsilon}}(\velstilde) = \tilde{\nabla} \velstilde + \tilde{\nabla}^T \velstilde$, the dimensionless numbers
$\text{Re} := \rho U_0 L_0 \mu^{-1}$, $\text{Eu} := \Delta P_0 \rho_0^{-1} U_0^{-2}$, $\text{Da} := \perm L_0^{-2}$,
and $\rho_0$ denotes a characteristic fluid density. We recognize that our equation system is effectively characterized by
a characteristic free-flow number $S = \text{Re}^{-1}\text{Eu}^{-1} = U_0 \mu L_0^{-1} \Delta P_0^{-1}$, the Darcy number, $\text{Da}$,
and the Beavers-Joseph slip coefficient, $\alpha$. Moreover, by comparing \eqref{eq:ds_dless}-\eqref{eq:interface_dles}
with \eqref{eq:darcy}-\eqref{eq: bcs} we observe that for unit scaling parameters $U_0$, $\Delta P_0$, $\rho_0$ and $L_0$
(as is the case in the manufactured problem \eqref{eq:mms}) we can
interpret $S$, respectively $\text{Da}$ as $\mu$ and $\perm$. To estimate the 
relevant ranges, we consider three examples.

\begin{example}[Channel flow over a regular porous medium in a micro-model]\label{ex:channel}
In \cite{Terzis2019}, water flow in a micro-model with a free-flow channel of height $200~\mu\textnormal{m}$ adjacent to
a regular porous medium is a examined at low Reynolds numbers, $U_0 \approx 0.2 - 0.4~\textnormal{mm}\,\textnormal{s}^{-1}$,
$\mu = 10^{-3}~\textnormal{Pa}\,\textnormal{s}$.
The slip coefficient $\alpha$ is determined as $2.26$. The permeability can be estimated by Poiseuille flow in a bundle of tubes due
to the regular geometry and is in the order of $\perm \approx 10^{-12} - 10^{-8}~\textnormal{m}^2$. Hence, $S \approx 1$,
$\textnormal{Da} \in [4\cdot10^{-4}, 4\cdot10^{0}]$, $\alpha = 2.26$.
\end{example}

\begin{example}[Air channel flow over porous medium box in a wind tunnel]\label{ex:tunnel}
Such a scenario may be modeled by the Stokes-Darcy system if the Reynolds number is
sufficiently small ($\textnormal{Re} < 1$). Assuming a channel width of $0.1~\textnormal{m}$,
air viscosity $\mu = 10^{-5}~\textnormal{Pa}\,\textnormal{s}$
and $\textnormal{Re} = 1$ yields $U_0 = 10^{-4}~ \textnormal{m}\,\textnormal{s}^{-1}$. Such a velocity would only require
$\Delta P_0 \approx 10^{-9}~\textnormal{Pa}$ (estimated assuming Poiseuille flow in a tube).
Using a laboratory sand with $\perm \approx 10^{-12}~\textnormal{m}^2$ yields $S \approx 10$,
$\textnormal{Da} \approx 10^{-10}$, $\alpha \in [1, 10]$.
\end{example}

\begin{example}[Cerebrospinal fluid flow in sub-arachnoid space and brain cortex]\label{ex:csf}
The brain cortex can be considered a porous medium with $\perm \approx 10^{-18} - 10^{-16} ~\textnormal{m}^2$~\cite{Holter2017}.
The brain is surrounded by the sub-arachnoid space (SAS), a shallow void layer ($L_0 \approx 2~\textnormal{mm}$)
filled with a water-like fluid ($\mu = 10^{-3}~\textnormal{Pa}\,\textnormal{s}$). Typical Stokes velocities in SAS
range between $10^{-3}$ and $1~\textnormal{cm}\,\textnormal{s}^{-1}$, and typical pressure gradients are on the order of
$1~\textnormal{Pa}$ which gives $S \in [5\cdot 10^{-4}, 5 \cdot 10^{-1}]$,
$\textnormal{Da} \in [2.5\cdot10^{-13}, 2.5\cdot10^{-11}]$, $\alpha \in [1, 10]$.  
\end{example}

\subsection{Robustness study}\label{sec:param_sweep}
\crefrange{ex:channel}{ex:csf} reveal that $S \in [10^{-5}, 10^{1}]$, $\text{Da} \in [10^{-14}, 10^{0}]$,
$\alpha \in [0, 10^{2}]$ cover a wide range of relevant applications.
Following the problem and solver setup described in \cref{ex:naive},
we report iterations of the preconditioned MinRes solver using the
three Stokes-Darcy formulations \eqref{eq:ds_weak}, \eqref{eq:dsLM_weak} and \eqref{eq:ds_robin_weak} with the numerically exact (LU-inverted) preconditioners \eqref{eq:daq_precond}, \eqref{eq:dsLM_precond}
and \eqref{eq:ds_robin_precond}. Discretization in terms of both FEM and FVM
is considered. We recall that due to the experimental setup, in particular, the unit sized
scaling parameters, cf. \ref{eq:mms}, the ranges identified in \cref{sec:params} are effectively the
ranges for $\mu$, $\perm$ and $\alpha$.

\subsubsection{Preconditioning $\mathcal{A}^\text{Tr}$}
Using discretization by FEM, we investigate formulation 
\eqref{eq:ds_weak} with preconditioner \eqref{eq:daq_precond}. Both conforming
$\vec{P}_2$-${P}_1$-${P}_2$ and non-conforming $\vec{C}\vec{R}_1$-${P}_0$-${P}_0$
elements are used. For the latter, we employ a facet stabilization
\cite{burman05} (see also \eqref{eq:CR_stab}). We refer to \cref{sec:mms} for
approximation properties of these schemes for the Stokes-Darcy problem.

Starting with the conforming $\vec{P}_2$-${P}_1$-${P}_2$ elements,
\cref{fig:fem_iters_TH_CR} summarizes performance of \eqref{eq:daq_precond}
for the Trace formulation. Specifically, in each subplot
corresponding to a fixed value of $\mu$ (varies in row), we plot the iteration
count for different refinement levels, six different values of $k$ indicated by color and
four different values of the slip coefficient $\alpha$. It can be seen that the iterations
are bounded in mesh size as well as the material parameters. Specifically,
between $24$ and $53$ iterations are required for convergence in all cases.
Furthermore, the (bounded) condition numbers of the preconditioned
systems are reported in \cref{sec:meet_dirichlet}. 
\begin{figure}
  \centering
  \includegraphics[width=\textwidth]{./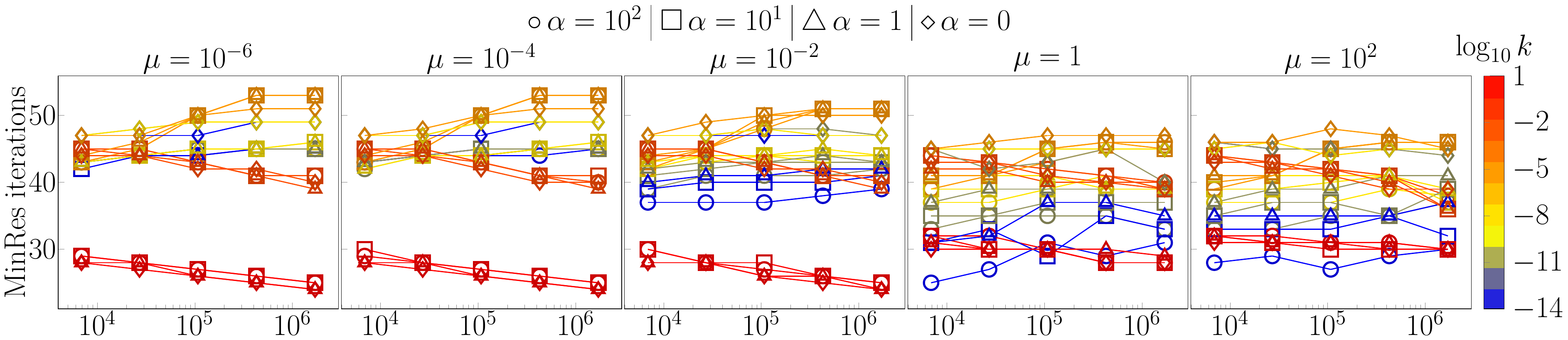}
  \includegraphics[width=\textwidth]{./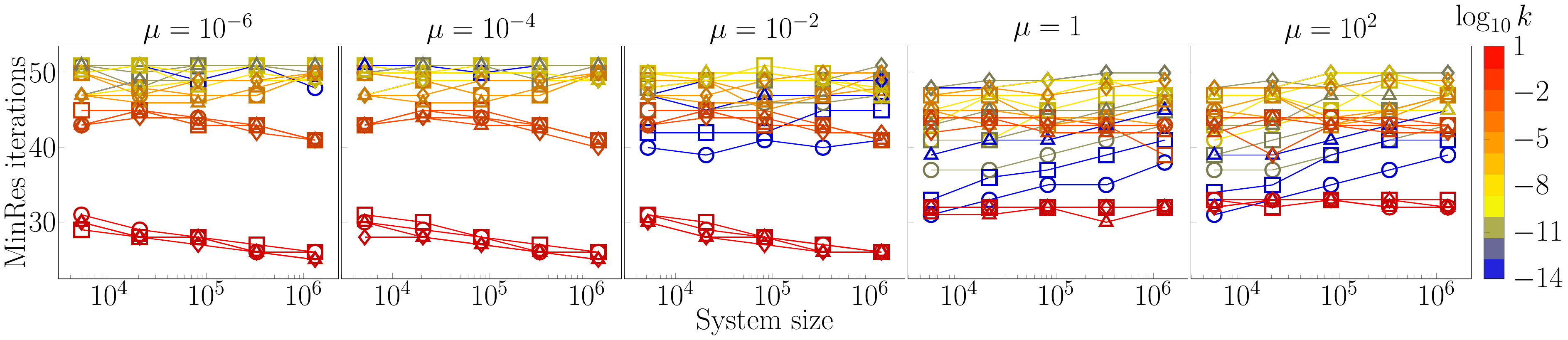}  
  \vspace{-25pt}  
  \caption{
    Performance of preconditioner \eqref{eq:daq_precond} for the trace
    formulation \eqref{eq:ds_weak} and the model problem from \cref{ex:naive} with parameter ranges
    identified in \cref{sec:params}.
    Discretization by $\vec{P}_2$-${P}_1$-${P}_2$ elements (top) and
    $\vec{C}\vec{R}_1$-${P}_0$-${P}_0$ elements (bottom) 
    using stabilization \cite{burman05} (see also \eqref{eq:CR_stab}).
  }
   \label{fig:fem_iters_TH_CR}
\end{figure}

For non-conforming $\vec{C}\vec{R}_1$-${P}_0$-${P}_0$ elements, the results
are given in the bottom panel of \cref{fig:fem_iters_TH_CR}. We observe that the iterations appear bounded, varying betweeen $25$ and $51$.
However, there is a modest increase\footnote{
  Between the smallest and the largest system considered the iterations grow by 10 while the system size increases 
  by 3 orders of magnitude.
} with $h$ for $\perm=10^{-14}$ and $\mu\geq 1$.
We attribute this growth to round-off errors when inverting the preconditioner since
the pressure block is then scaled with $10^{-16}$-$10^{-14}$.

\subsubsection{Preconditioning $\mathcal{A}^\text{La}$ and $\mathcal{A}^\text{Ro}$}\label{sec:la_ro_robustness}
We discuss robustness of \eqref{eq:dsLM_precond} and \eqref{eq:ds_robin_precond} for
the multiplier formulation \eqref{eq:dsLM_weak} and the Robin formulation
\eqref{eq:ds_robin_weak}. Linear solver (MinRes) iterations over a large range
of parameters are shown in \cref{fig:fvm_iters_LM_Robin} and confirm
parameter-robustness in both cases, with iteration counts between $10$ and $39$ for \eqref{eq:dsLM_precond}-preconditioned $\mathcal{A}^\text{La}$ and iteration counts between $1$ and $48$ for \eqref{eq:ds_robin_precond}-preconditioned $\mathcal{A}^\text{Ro}$. We note that in particular when the ratio $\kovermu = \mu^{-1}k$
is small, the reported iteration counts are very small (even one in the most extreme case) but stable for varying system sizes.
We can attribute this to the specific configuration of the test case.
With $\kovermu \ll 1$
the contribution $\betanormal \tracenormalprime \tracenormal $ in operator \eqref{eq:fvm_op} dominates the Stokes block.
We recall that $\betanormal := \betanormalfull$. However, the right-hand side of the linear system only scales with $\mu$ for our particular case and both normal velocity and normal velocity gradient are zero in the exact solution~\eqref{eq:mms}-\eqref{eq:mms_rhs}. In this setting, the linear solver manages to reduce the very large initial defect (due the combination of random initial guess in the range $\left[0,1\right)$, large operator norm, and small right hand side) by the requested factor of $10^8$ in only one iteration. We remark that in this case the approximation of the solution is rather poor and a stricter convergence criterion would be required to obtain an accurate solution. However, this does not diminish the observation that the iterations are bounded. In consistency with all other results, we therefore report the results for the specified reduction of $10^8$. This particularity does not affect the multiplier formulation since the term $\betanormal \tracenormalprime\tracenormal $ is not present in operator \eqref{eq:dsLM_op}.
  To fully convince the reader, we additionally report condition numbers of the
  discrete preconditioned operators 
  in \cref{sec:fvm_cond_lm_and_robin}.
The results show that the condition number stays between $5$ and $17$ for all reported parameter combinations.
We note that the condition number estimates involving $\mathcal{B}^{\text{Ro}}$ are reported over
a smaller range of mesh sizes than in  \cref{fig:fvm_iters_LM_Robin} since the computations
require an expensive assembly of an inverse of sum of two inverted matrices.

%
\begin{figure}
  \centering
  \includegraphics[width=\textwidth]{./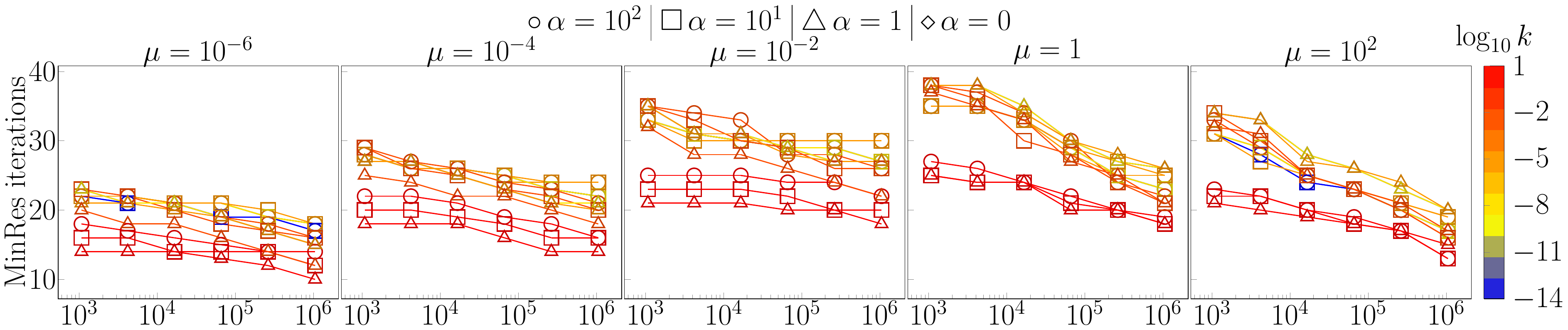}
  \includegraphics[width=\textwidth]{./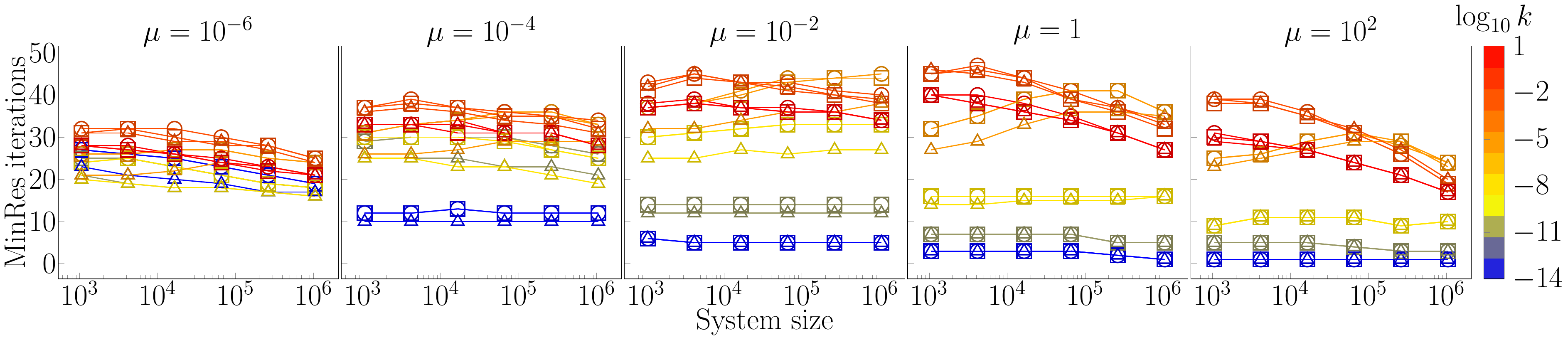}
  \vspace{-25pt}  
  \caption{
    Iteration counts for the \eqref{eq:dsLM_precond}-preconditioned multiplier formulation \eqref{eq:dsLM_weak}
    (top) and \eqref{eq:ds_robin_precond}-preconditioned Robin formulation \eqref{eq:ds_robin_weak} (bottom). 
    Discretization with FVM as described in \cref{sec:app:fvm}.
  }
  \label{fig:fvm_iters_LM_Robin}
\end{figure}

Preconditioners $\mathcal{B}^{\text{La}}$ and $\mathcal{B}^{\text{Ro}}$ 
were also investigated using non-conforming $\vec{C}\vec{R}_1$-${P}_0$-${P}_0$(-$P_0$) elements.
Results collected in \cref{fig:fem_iters_CR_LM_Robin} confirm
robustness of both preconditioners. The number of iterations remained between
$24$ and $50$ for \eqref{eq:dsLM_precond}-preconditioned $\mathcal{A}^\text{La}$ and
between $16$ and $50$ for $\mathcal{A}^\text{Ro}$ with preconditioner \eqref{eq:ds_robin_precond}.
\begin{figure}
  \centering
  \includegraphics[width=\textwidth]{./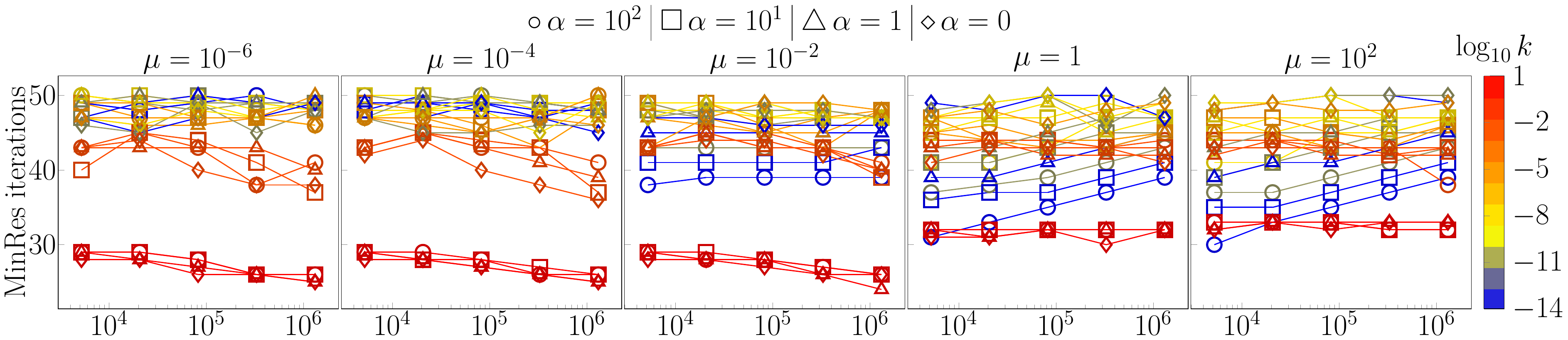}
  \includegraphics[width=\textwidth]{./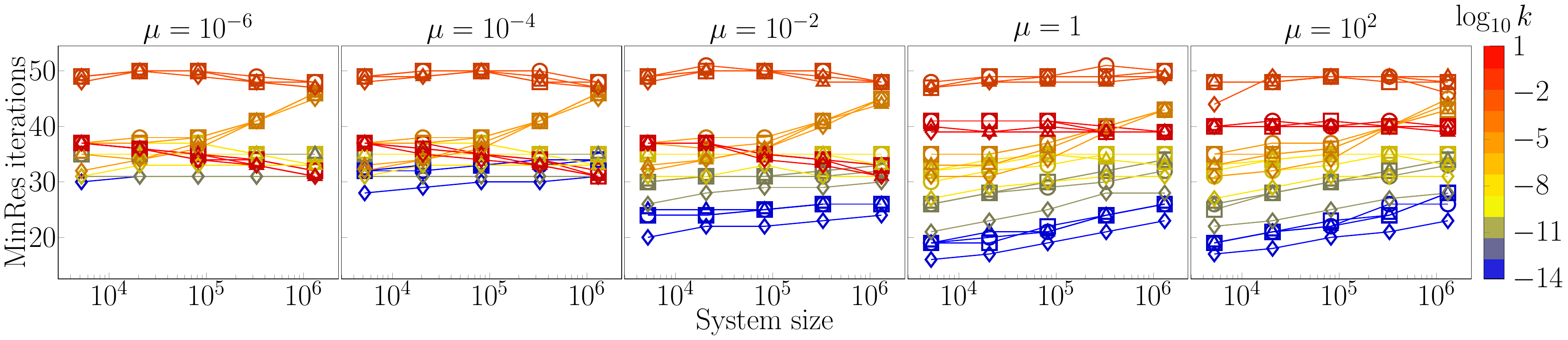}
  \vspace{-25pt}  
  \caption{
    Iteration counts for the \eqref{eq:dsLM_precond}-preconditioned Lagrange multiplier formulation \eqref{eq:dsLM_weak}
    (top) and Robin formulation \eqref{eq:ds_robin_weak} with preconditioner \eqref{eq:ds_robin_precond} (bottom). 
    Discretization by $\vec{C}\vec{R}_1$-${P}_0$-${P}_0$(-${P}_0$) elements. 
  }
  \label{fig:fem_iters_CR_LM_Robin}
\end{figure}

\subsection{Three-dimensional examples}\label{sec:efficiency}
The robustness study of \cref{sec:param_sweep} concerned a two-dimensional
setup leading to a rather small interface with only a few hundred cells in $\Gamma_h$.
Moreover, the preconditioners $\mathcal{B}^{\text{Tr}}$, $\mathcal{B}^{\text{La}}$ and $\mathcal{B}^{\text{Ro}}$
were always computed exactly. To address the efficiency of the preconditioners
in more practical scenarios, we next apply the proposed Stokes-Darcy solvers to two three-dimensional model problems.
In particular, we investigate the effect of approximating the action of the
preconditioner blocks in terms of off-the-shelf multilevel methods. In addition,
the problems are chosen such that we go beyond the assumptions on 
the interface (not a closed surface) and the boundary conditions (interface intersects with Neumann boundary) introduced at the end of \cref{sec:problem} to simplify the theoretical analysis.

In the following two examples,
we investigate solvers for two of the proposed Stokes-Darcy formulations:
(A) the Trace formulation \eqref{eq:ds_weak} with preconditioner \eqref{eq:daq_precond}
and FEM discretization and (B)
the Lagrange multiplier formulation \eqref{eq:dsLM_weak} with preconditioner
\eqref{eq:dsLM_precond} discretized by FVM.
We remind the reader that the FEM and FVM implementations differ in the software
stack. In particular, all FEM results using direct solvers are obtained with MUMPS \cite{MUMPS}, while FVM results use UMFPACK \cite{davis2004a}.
The FEM results rely on Hypre's BoomerAMG \cite{hypre}, while for FVM the algebraic multigrid of dune-istl~\cite{Blatt2010AMG} is used. Moreover, the results are computed with different hardware setup (A) 
Ubuntu workstation with AMD Ryzen Threadripper 3970X 32-Core processor and
128GB of memory, (B) openSUSE workstation
with AMD Ryzen Threadripper 3990X 64-Core processor and 270GB of memory.
However, in both cases the computations
are run in serial restricted to one CPU\footnote{Single threaded execution of all solver components is enforced by setting \texttt{OPM\_NUM\_THREADS=1}.}.
Finally (and going more beyond the presented theory), the Stokes block in the FVM
operators \eqref{eq:dsLM_op} and \eqref{eq:dsLM_precond} is not symmetric due to
a non-symmetric stencil in the current implementation of boundary condition \eqref{eq:BJS} for the case of
reentrant corners in the Stokes domain. However, the asymmetry is localized to the few degrees of freedom
associated with the interface.
As symmetry is a strict requirement for MinRes, we present GMRes iterations instead.

To evaluate efficiency of the proposed preconditioners, we compare their numerically exact
realization to approximations in terms of multilevel methods. For (A) and BoomerAMG, the different
approximations correspond to computing the action of each block by increasing
numbers (same for each block for simplicity) of $\text{V}(2, 2)$ AMG cycles per application of
the preconditioner. We used default settings except for the aggregation threshold which is set to $0.7$, the recommended value for $3d$ problems. For (B) and Dune::AMG, the number of smoother iterations $n$ on
each level of a $\text{V}(n,n)$-cycle was varied.

In addition, we compare the solvers,
with the analogues of the na{\"i}ve precondiner presented in \cref{ex:naive},
that is, $\mathcal{B}^{\text{Tr}}$, respectively $\mathcal{B}^{\text{La}}$ with the
fractional operator omitted. (Moreover, in this case the pressure block of the preconditioner reads $-\kovermu(\Delta+I)$
to avoid the singularity due to the Neumann boundary conditions on
$\partial\Omega_D\setminus\Gamma$.)

\subsubsection{Channel flow over porous hill}\label{sec:channel}
\begin{figure}[htb]
  \centering
  \includegraphics[width=0.82\textwidth]{./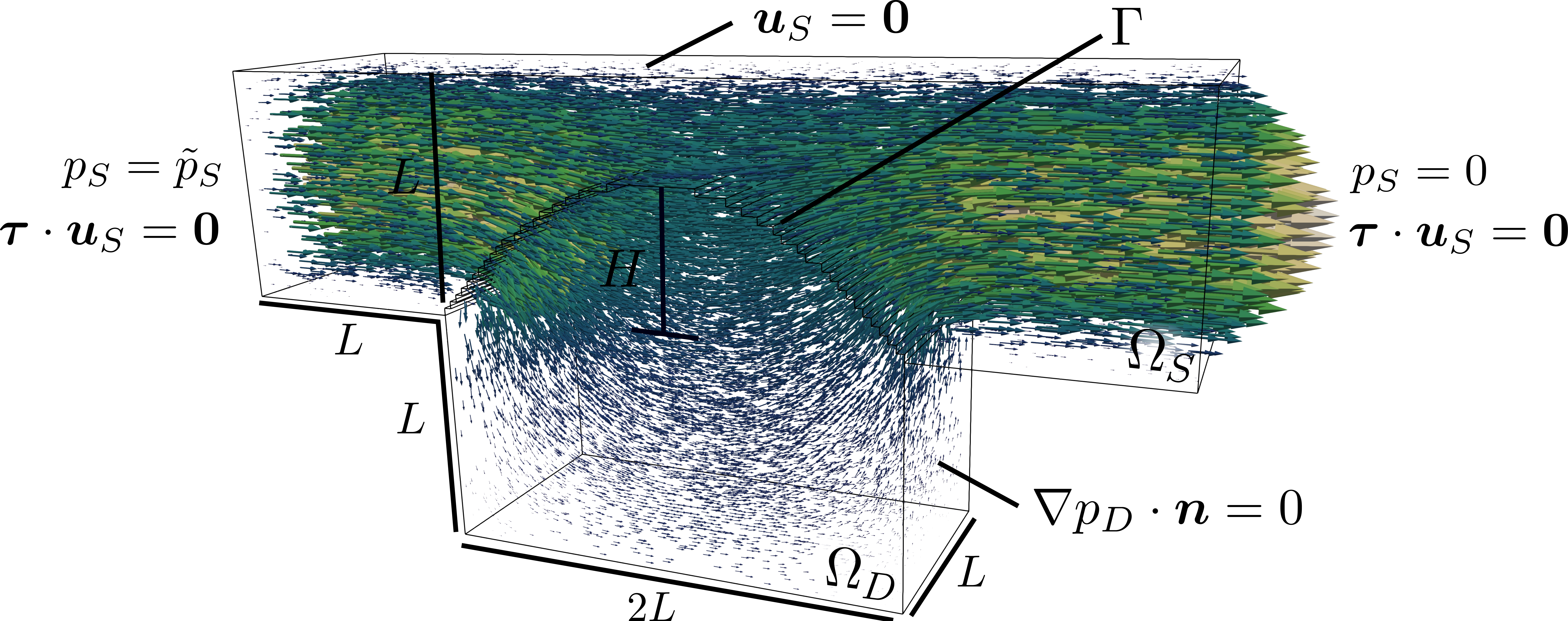}
  \vspace{-10pt}
  \caption{
    Setup for channel flow example in \cref{sec:efficiency}. All boundaries are no-flow boundaries
    except for the right and left sides of the channel where we prescribe pressure and zero
    tangential velocities with $\tilde{p}_S = 10^{-8}~$. The dimensions are given by
    $L = 0.5$, $H = 0.3$ and the interface $\Gamma$ is chosen as a section of a cylinder
    with radius $R = H/2 + L^2/(2H)$. Arrows visualize the resulting velocity field for
    $\mu = 10^{-3}$, $\perm = 10^{-2}$, $\alpha = 1$, where longer, lighter arrows correspond to higher velocities.
  }
  \label{fig:3d_sandpit}
\end{figure}
\begin{table}
\begin{minipage}{\textwidth}
  \centering
  \caption{
    Exact and approximate preconditioners for Stokes-Darcy problem in \cref{fig:3d_sandpit}
    using the Trace formulation \eqref{eq:ds_weak} and preconditioner \eqref{eq:daq_precond}.
    Discretization by $\vec{P}_2$-${P}_1$-${P}_2$. MinRes iterations until
    convergence (reducing preconditioned residual norm by factor $10^{8}$)
    with the preconditioner computed exactly (bLU) and by $j\text{V}(2, 2)$
    cycles of AMG are shown.
    The numbers in parenthesis represent
    aggregate solver setup and solver run time rounded to full seconds. (Top) $k=10^{-2}$,
    (bottom) $k=10^{-5}$. In addition timings for solving $\mathcal{A}^{\text{Tr}}$ using 
    a \emph{direct} solver and preconditioned MinRes solver with the exact \emph{na{\"i}ve}-preconditioner of \cref{ex:naive} are listed.
  }
  \vspace{-10pt}
  \label{tab:sandpit_fem}
  \footnotesize{
    
    \begin{tabular}{rr|||l|lll||ll}
      \hline
      dofs & $\lvert V_h \rvert$ & bLU & 1V(2,2) & 2V(2,2) & 4V(2,2) & direct\textsuperscript{a} & na{\"i}ve\\
      \hline
3562 & 107    & 84 (1) & 92 (1) & 85 (1) & 84 (1)           & - (1)     & 98 (1) \\
13452 & 293   & 89 (2) & 103 (2) & 91 (3) & 89 (5)          & - (1)    & 106 (2) \\
69554 & 1023  & 88 (11) & 112 (20) & 92 (30) & 88 (54)      & - (4)   & 106 (9)  \\
468646 & 3671 & 88 (173) & 129 (265) & 101 (388) & 89 (646) & - (98) & 108 (138) \\
      \hline
3562 & 107       & 88 (1) & 108 (1) & 98 (1) & 95 (1)          & - (1) & 1065 (2)    \\
13452 & 293      & 92 (2) & 123 (2) & 106 (4) & 101 (6)        & - (1) & 1538 (14)   \\
69554 & 1023    & 93 (11) & 143 (23) & 110 (33) & 98 (55)      & - (4) & 1659 (110)  \\
468646 & 3671  & 97 (180) & 164 (314) & 122 (439) & 105 (716)  & - (98) & 1661 (1293)\\
      \hline
  \end{tabular}\\
  \textsuperscript{a} MUMPS 
  }
\end{minipage}
\end{table}
\begin{table}[htb]
\begin{minipage}{\textwidth}
  \caption{
    Exact and approximate preconditioners for Stokes-Darcy problem in \cref{fig:3d_sandpit}
    using formulation \eqref{eq:dsLM_weak} and preconditioner $\mathcal{B}^\text{La}$~\eqref{eq:dsLM_precond}. Discretization with FVM (Staggered-TPFA). GMRes iterations (reducing preconditioned residual norm by factor $10^{8}$) are shown. In parenthesis, we provide wall clock times (aggregate solver setup and runtime) rounded to full seconds. Tables shows results for $\perm = 10^{-2}$ (top), $\perm = 10^{-5}$ (middle), $\perm = 10^{-12}$ (bottom). In addition timings for solving $\mathcal{A}^{\text{La}}$ using 
    a \emph{direct} solver and preconditioned GMRes solver with the exact \emph{na{\"i}ve}-preconditioner constructed by omission of the fractional component in $\mathcal{B}^{\text{La}}$
    are included.
    }
    \label{tab:sandpit_fvm}
    \centering
    \vspace{-10pt}
    \footnotesize{
    \begin{tabular}{r r ||| l|lll||ll}\hline
     dofs & $|V_h|$   &  bLU &  1V(1,1) & 1V(2,2) & 1V(4,4) & direct\textsuperscript{a} & na{\"i}ve\\\hline
   8640 &   208 &   60 (1)&   83 (1)&   71 (1)&   64 (1) &    - (1) &   89 (1)\\ 
  66342 &   832 &   64 (11)&  109 (7)&   92 (7)&   81 (7) &    - (6) &   86 (13)\\ 
 517552 &  3264 &   66 (337)&  156 (132)&  128 (134)&  109 (146) &    - (468) &   84 (374)\\ 
   \hline
   8640 &   208 &   69 (1)&  111 (1)&   96 (1)&   89 (1) &    - (1) &  330 (3)\\ 
  66342 &   832 &   82 (13)&  145 (8)&  127 (8)&  112 (9) &    - (6) &  403 (53)\\ 
 517552 &  3264 &   91 (411)&  201 (161)&  169 (163)&  147 (180) &    - (485) &  449 (1431)\\ 
   \hline
   8640 &   208 &   69 (1)&  113 (1)&   97 (1)&   89 (1)&   - (1) & 4411 (158) \\ 
  66342 &   832 &   83 (14)&  147 (11)&  130 (11)&  116 (13)&  - (6) & n/c\textsuperscript{b} \\
  517552 &  3264 &   99 (437)&  203 (235)&  180 (256)&  161 (312)&    - (533) & n/c\textsuperscript{b} \\
  \hline
   \end{tabular}\\
   \textsuperscript{a} UMFPACK \quad \textsuperscript{b} not converged in under $10'000$ iterations
}
\end{minipage}
\end{table}

As the first model problem,
we consider viscous flow over
a porous medium with a curved interface, see \cref{fig:3d_sandpit}. Let $\alpha=1$, $\mu=10^{-3}$ and $k\in\{10^{-2}, 10^{-5}\}$.
The fluid motion is driven by a pressure difference between the inlet and 
outlet where the (non-standard) boundary conditions $p_S=\tilde{p}_{S}$ (inlet, $p_S=0$ on the outlet)
and $\utangent\cdot\vels=\vec{0}$ (see e.g. \cite{girault1990curl}) are prescribed.

On the rest of the fluid
domain, we enforce $\vels = \vec{0}$, while the boundary of the porous domain is
impermeable (homogeneous Neumann boundary conditions).
Therefore, newly, the interface intersects (mixed boundaries) $\Gamma^{\vec{u}}_S$ and
$\Gamma^{u}_D$. The fact that $\Gamma$ is incident to the Dirichlet boundary on the 
Stokes side translates to a modification of the preconditioner
such that the fractional operator is now constructed with Dirichlet boundary conditions,
see \cref{sec:meet_dirichlet} for further details.

Performance of the $\mathcal{B}^{\text{Tr}}$-preconditioned formulation
\eqref{eq:ds_weak} discretized with the $\vec{P}_2$-${P}_1$-${P}_2$ FEM is summarized in \cref{tab:sandpit_fem}. It can
be seen that exact preconditioners lead to iterations bounded in refinement
with little sensitivity to the change in permeability. In addition, the LU-based
preconditioners are noticeably faster\footnote{
  Due to the used (mostly default) settings the timings of AMG should be considered a pessimistic bound for the performance.
} than the AMG-based approximation. 
We remark that with LU at most 30\% of the reported time was spent in the setup
phase which was dominated by factorization of the blocks.
 To give an example of the cost of the eigensolver, for the finest interface mesh
  reported in \cref{tab:porousblocks_fem}, $\lvert V_h \rvert=13976$, assembly 
  of the fractional block takes
  $256~\text{s}$. 
  However, the presence of the resulting (large) dense block in the matrix of the pressure
preconditioner also affects factorization time and the cost per Krylov iteration.

For preconditioners
realized by AMG cycles robustness in $h$ requires at least four V cycles if
$k=10^{-2}$ while 8 cycles are needed for $k=10^{-5}$. This result supports our
observation (not reported here) that black-box algebraic multigrid is not
a parameter-robust preconditioner for the pressure block in  \eqref{eq:daq_precond}.
Specifically, AMG struggles when the interface term dominates the Laplacian
in $\Omega_D$. Finally, in agreement with \cref{ex:naive} for $k=10^{-5}$, the na{\"i}ve preconditioner
leads to considerably more iterations (and slower run time) than $\mathcal{B}^{\text{Tr}}$.
However, none of the iterative approaches outperform the direct solver for the reported system sizes.

Performance of the $\mathcal{B}^{\text{La}}$-preconditioned formulation
\eqref{eq:dsLM_weak} discretized with the Staggered-TPFA FVM is summarized in \cref{tab:sandpit_fvm}.
In comparison with the FEM results but in consistency with observations in $2d$ examples in \cref{sec:param_sweep},
the solvers based on FVM and exact preconditioner initially show a slight increase of the number of iterations
with refinement (in particular for small $k$). However, we point out that the difference in the number of iterations between consecutive grid refinements gets smaller and smaller (similar to what can be seen for the condition numbers in \cref{sec:fvm_cond_lm_and_robin}).
While the solver with exact $\mathcal{B}^\text{La}$ appears parameter-robust, it is evident that the na{\"i}ve preconditioner (missing the fractional component) is not robust in $k$.
When approximating all blocks with AMG, the fastest execution times could be achieved. In comparison with BoomerAMG, Dune::AMG uses a faster but less accurate interpolation strategy, which leads to considerably faster execution time per iteration. Increasing the number of smoother iterations reduces iteration counts but due to the increased cost per iteration does not result in a better performance. Moreover, the Dune::AMG-based solver does not show robustness with grid refinement, even for a large number of smoother iterations (we tested up to $64$). However, the Dune::AMG-based solver appears robust in the model parameters. 

\subsubsection{Embedded porous blocks}%
\begin{figure}[htb]
  \centering
  \includegraphics[width=0.76\textwidth]{./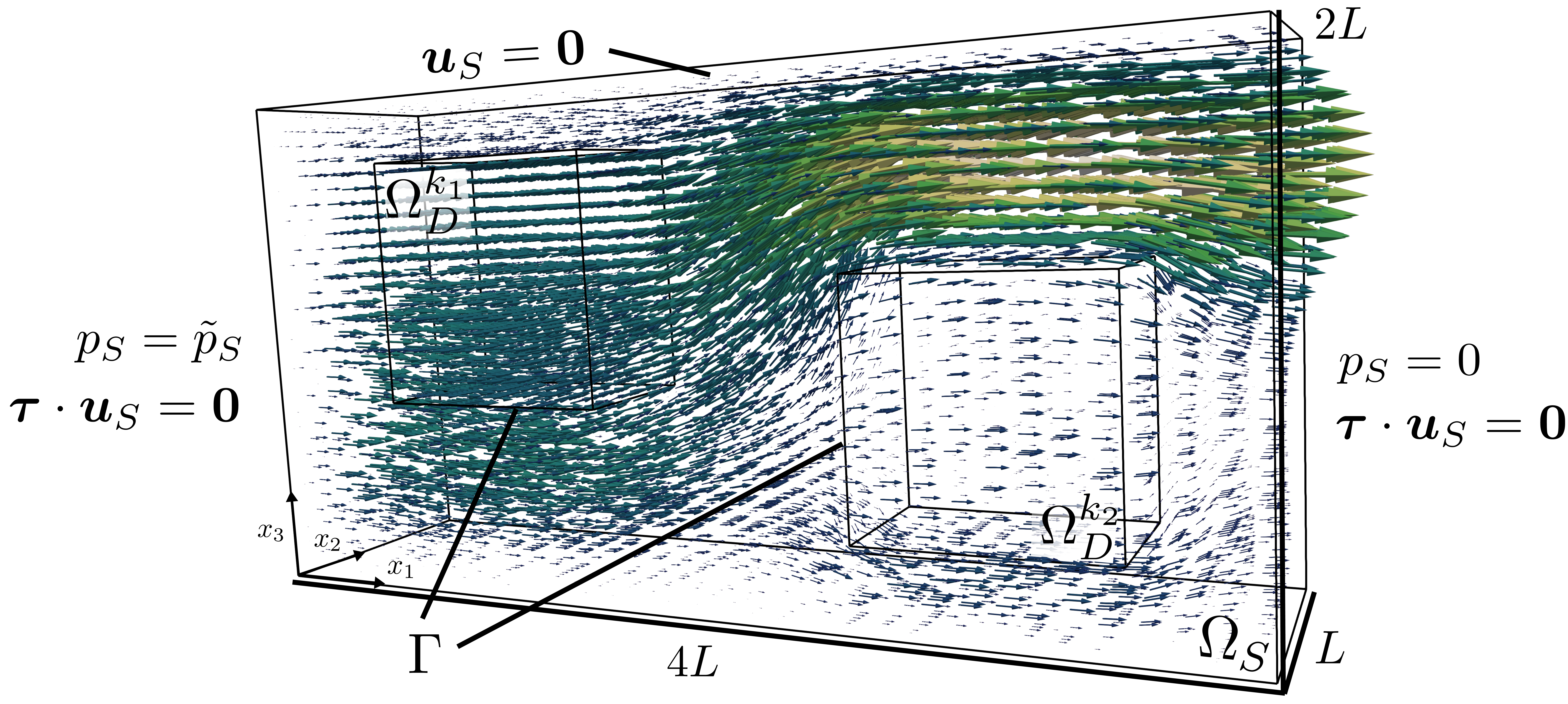}
  \vspace{-10pt}
  \caption{Setup for porous blocks example in \cref{sec:efficiency}. All boundaries are no-flow boundaries
    except for the right and left sides of the channel where we prescribe pressure and zero
    tangential velocities with $\tilde{p}_S = 10^{-8}~$. The channel dimension are given by
    $L = 0.5$ and the blocks are $\Omega_D^{k_1} = [L/2, 3L/2]\times[2L/8, 7L/8]\times[3L/4, 7L/4]$ and $\Omega_D^{k_2} = [5L/2, 7L/2]\times[L/8, 6L/8]\times[1L/4, 5L/4]$ and are assigned different permeabilities $k_1 = 10^{-1}$, $k_2 = 10^{-3}$. Moreover, $\mu = 10^{-3}$, $\alpha = 1$. The coarsest FVM discretization is a structured (anisotropic) rectangular cuboid mesh with $16\times16\times16$ cells. FEM results are computed with unstructured tetrahedral meshes.
  }
  \label{fig:sandbanks}
\end{figure}
\begin{table}
  \centering
  \caption{
    Exact and approximate preconditioners for Stokes-Darcy problem in \cref{fig:sandbanks}
    using the Trace formulation \eqref{eq:ds_weak} and preconditioner \eqref{eq:daq_precond}.
    Discretization by $\vec{P}_2$-${P}_1$-${P}_2$. Legend as in \cref{tab:sandpit_fem}
  }
  \vspace{-10pt}  
  \label{tab:porousblocks_fem}
  \footnotesize{
    \begin{tabular}{r @{  }r|||l|lll||ll}
      \hline
      dofs & $\lvert V_h \rvert$ &bLU & 1V(2,2) & 2V(2,2) & 4V(2,2) & direct & na{\"i}ve\\
      \hline
7256 & 484      & 123 (1) & 143 (2) & 129 (3) & 125 (4)             & - (1)     & 240 (2)\\
25260 & 1212    & 125 (4) & 152 (8) & 131 (13) & 127 (21)           & - (1)     & 257 (6)\\
124732 & 3768   & 125 (37) & 162 (84) & 133 (127) & 126 (221)       & - (12)   & 264 (46)\\
836293 & 13976  & 126 (932) & 189 (1426) & 144 (1909) & 129 (3049)  & - (370) & 275 (721)\\
 \hline
  \end{tabular}
  }
\end{table}
\begin{table}
  \centering
  \caption{
    Exact and approximate preconditioners for Stokes-Darcy problem in \cref{fig:sandbanks}
    using the Lagrange multiplier formulation \eqref{eq:dsLM_weak} and preconditioner \eqref{eq:dsLM_precond}.
    Discretization by FVM. Legend as in \cref{tab:sandpit_fvm}.
  }
  \vspace{-10pt}  
  \label{tab:porousblocks_fvm}
  \footnotesize{
    \begin{tabular}{r @{  }r|||l|ll||ll}
\hline
 dofs & $\lvert V_h \rvert$ &bLU & 1V(1,1) & 1V(4,4)  & direct & na{\"i}ve \\ \hline
  16144 &   608 &  107 (4) &  151 (3)&  118 (3)&   - (1) & 257 (6) \\ 
 122432 &  2432 &  109 (85) &  192 (62)&  147 (70)&   - (64) & 258 (138) \\ 
 952576 &  9728 &  109 (3861) &  281 (3458)&  195 (3310)&   - (5617) & 187 (4806.8) \\ \hline
  \end{tabular}
  }
\end{table}

In the second and final example, we consider viscous channel flow
past and through two porous inclusions with different permeabilities, see \cref{fig:sandbanks}.
From the point of view of assumptions of \cref{sec:problem}, the novel feature is the fact
that the interface is now formed by two closed surfaces.

Iterations counts and runtime estimates for various solvers are shown in \cref{tab:porousblocks_fem} (FEM) and
\cref{tab:porousblocks_fvm} (FVM). In general, the conclusions from \cref{sec:channel}
apply to the new example as well. In particular, exact preconditioners
$\mathcal{B}^{\text{Tr}}$, $\mathcal{B}^{\text{Tr}}$ yield iteration counts that are stable in mesh size.

\section{Conclusions and outlook}\label{sec:summary}
Our work concerned monolithic preconditioning of symmetric formulations of
the coupled Stokes-primal Darcy problem which were motivated by differences in handling the
interface coupling that are natural to finite element and finite volume methods. Parameter robust
preconditioners for each of the three formulations were constructed based on the well-posedness of the
problems established within a unifying functional framework.
The proposed preconditioners are based on norms in fractional Sobolev spaces. Using discretization
in terms of both FEM and FVM our numerical results
demonstrated the parameter-robustness in several examples partly going beyond the presented theory in
terms of boundary conditions and interface configuration. However, efficiency of the proposed solvers
is currently sub-optimal
due to the realization of the pressure preconditioner,
in particular, the reliance on the spectral form of the fractional interface operators.

To improve efficiency of the proposed preconditioners scalable techniques for the
parameter-robust approximation of the components, in particular, the pressure block, will be
addressed in the future work. To possibly improve the efficiency further,
the use of lower/upper-triangular preconditioners or approximations of full Schur complement factorizations
could be investigated. Here, a reduction of the iteration count is expected but the cost-benefit ratio
of such an approach for the presented cases remains to be seen. Finally, extensions
of the proposed preconditioners to more complex physics such as the Navier-Stokes-Darcy problem
may be addressed in future work.

\bibliographystyle{siamplain}
\bibliography{references}

\newpage
\appendix


\section{Non-conforming discretizations}\label{sec:discrete_ops}
This section provides additional details on discretization of the Stokes-Darcy
operators by lowest-order non-conforming FEM (i.e. $\vec{C}\vec{R}_1$ elements for the space 
$\bV_{S, h}$ and $\Pnot$ elements for $Q_{D, h}$) and FVM. In the following
we denote as $\mathcal{F}^{i}(\Omega_h)$ the set of interior facets
of a given mesh $\Omega_h$ of generic bounded Lipschitz domain $\Omega$.

\subsection*{Non-conforming finite element discretization} To
obtain stable discretization of the Stokes subproblem in \eqref{eq:ds_weak}-\eqref{eq:ds_robin_weak}
on the space $\bV_{S, h}=\bV_{S, h}(\Omega_h)$ constructed in terms of Crouzeix-Raviart $\vec{C}\vec{R}_1$ element
we employ the facet stabilization \cite{burman05}. That is, the operator $-\nabla\cdot(2\mu\symgradop)$
is discretized as
\begin{equation}\label{eq:CR_stab}
\langle-\nabla\cdot(2\mu\symgradop(\bu)), \bv\rangle := (2\mu\symgrad{\bu}, \symgrad{\bv})_{\Omega}
+ \sum_{F\in \mathcal{F}^{i}({\Omega_{h}})} (\frac{2\mu}{\lvert F \rvert}\jump{\bu}, \jump{\bv})_F, \quad\forall \bu, \bv\in \bV_{h},
\end{equation}
where $\jump{\vec{u}}=\vec{u}|_{K^{+}}-\vec{u}|_{K^{-}}$
in which $K^{\pm}$ denote the two cells sharing the facet $F$. Moreover, we recall that
$\mu$ is assumed to be constant and that $h_K$ measures the distance between
facet midpoint and centroids/circumcenters of the connected cells.

Approximation of the Laplace operator in the space of piece-wise constant
functions $Q_h=Q_h(\Omega_h)$ uses a two-point flux approximation, that is,
we let
\[
\langle -\Delta p, q \rangle := 
\sum_{F\in \mathcal{F}^{i}({\Omega_{h}})} (\frac{1}{2\avg{h_K} }\jump{p}, \jump{q})_F + (h_K^{-1}p, q)_{\Gamma^p}, \quad\forall p, q \in Q_{D},
\]
where $\jump{{p}}={p}|_{K^{+}}-{p}|_{K^{-}}$, $\avg{p}=\tfrac{1}{2}(p|_{K^{+}}+p|_{K^{-}})$, 
and $\Gamma^p\subseteq \partial\Omega$ is the part of the domain boundary
with Dirichlet data. This definition is also used when assembling the
fractional operator via the eigenvalue problems in \eqref{eq:eigv} and in 
\cref{sec:meet_dirichlet}.
%
\subsection*{Finite volume discretization}\label{sec:app:fvm}
The herein employed finite volume discretization method (FVM) is a combination of a staggered face-centered finite volume scheme
(Staggered) for the Stokes momentum balance equation \eqref{eq:stokes_mom} and a cell-centered finite volume scheme with two-point flux approximation (TPFA) for the Stokes mass balance equation \eqref{eq:stokes_mass}, the Darcy equation \eqref{eq:darcy} and, if using the Lagrange multiplier formulation, eigenvalue problem \eqref{eq:eigv}. For the description of the Staggered FVM, we refer to~\cite{Harlow1965,Shiue2018,Schneider2020}. Here, due to the immediate relevance for the construction of the Lagrange multiplier and Robin formulations \eqref{eq:dsLM_weak} and \eqref{eq:ds_robin_weak} as well as the assembly of eigenvalue problem \eqref{eq:eigv}, we briefly review the cell-centered TPFA FVM.

Let us consider as example the Darcy equation \eqref{eq:darcy}.
We integrate \eqref{eq:darcy} over each control volume $\cell \in \Omega_{D,h}$, apply the divergence theorem
and approximate the interface fluxes by a discrete numerical flux approximation $\Psi_{K,F}$ for each face $F$ of $K$, 
\begin{equation*}
   - \int_{\partial\cell} \mu^{-1}k \nabla p_D \cdot \vec{n}_{\cell,F} \,\textrm{d}s = \int_{\cell} f_D \,\textrm{d}x \quad \Rightarrow \quad \sum\limits_{F \in \partial K} \Psi_{K_D,F} = \meas{\cell} f_D(\vec{x}_\cell),
\end{equation*}
where $\vec{n}_{\cell,F}$ is a unit normal vector on $F$ pointing out of $K$ and $\vec{x}_\cell$ is the centroid of $\cell$.
A two-point flux approximation for $\Psi_{K_D,F}$ on inner facets is then given by
\begin{equation}\label{eq:TPFA}
\Psi_{\cell,\face} := \frac{1}{\mu}\frac{t_{\cell,\face}t_{L,\face}}{t_{\cell,\face} + t_{L,\face}} (p_\cell - p_L), \quad t_{\cell,\face}
:= k_K\frac{\vec{d}_{{\cell},\face} \cdot \vec{n}_{\cell,\face}}{\vert\vert\vec{d}_{\cell,\face} \vert\vert^2}\meas{\face},
\end{equation}
where $p_\cell$ denotes the average cell pressure in cell $\cell$, $k_K$ the permeability of cell $K$, $\vec{d}_{\cell,\face}$  the vector connecting $\vec{x}_\cell$ and an integration point on $F$ (e.g. centroid), and $L$ is a neighboring cell sharing $F$ with $K$. Note that \eqref{eq:TPFA} also applies for surface grids, for instance, for the approximation of eigenvalue problem \eqref{eq:eigv} on curved surfaces. On the boundary, we either specify $\Psi_{\cell,\face}$ directly (Neumann boundary conditions) or compute $\Psi_{\cell,\face} = \mu^{-1}k_K t_{\cell,\face} (p_\cell - p_{\partial\cell})$ where $p_{\partial\cell}$ is the given boundary data on $F \subset \partial\Omega_D$. (That is, Dirichlet data, or interface pressure $p_{\Gamma}$ in the case of the coupling interface.)
We remark that approximation \eqref{eq:TPFA} is only consistent on $\vec{K}$-orthogonal grids~\cite{Aavatsmark1998}.

\section{Numerical tests and manufactured solution}\label{sec:mms}
For the numerical grid convergence tests and parameter-robustness tests,
we work with the manufactured solution given in \cite{Shiue2018} for unit parameters
$\mu = 1$, $k = 1$, $\alpha = 1$ as
\begin{subequations}\label{eq:mms}
\begin{align}
   \vels &= \begin{bmatrix}
      -\frac{1}{\pi} \exp(x_2) \sin(\pi x_1) \\
      (\exp(x_2) - \exp(1)) \cos(\pi x_1)
   \end{bmatrix} &\text{in} \quad \Omega_S,\\
   p_S &= 2 \exp(x_2) \cos(\pi x_1), &\text{in} \quad \Omega_S,\\
   p_D &= (\exp(x_2) - x_2\exp(1)) \cos(\pi x_1) &\text{in} \quad \Omega_D, \\
\intertext{
where $\Omega_D = [0,1]\times[0,1]$, $\Omega_F = [0,1]\times[1,2]$.
Moreover, for the formulation with Lagrange multiplier,
}
    \lambda &= p_D(x_1, x_2 = 1) = 0 &\text{on} \quad \Gamma,
\end{align}
\end{subequations}
where $\Gamma = [0,1]\times \{1\}$.
To obtain the same solution over the whole range of parameters, we
use the following source terms
\begin{subequations}\label{eq:mms_rhs}
\begin{align}
   \bfS &:= \begin{bmatrix}
      \frac{1}{\pi} \exp(x_2) \sin(\pi x_1) (\mu - \mu\pi^2 - 2\pi^2)  \\
      \cos(\pi x_1) (\mu \left((\pi^2 + 1)\exp(x_2) - \pi^2\exp(1))\right) + 2(1-\mu)\exp(x_2))
    \end{bmatrix},\\
    f_D &:= \frac{k}{\mu} \cos(\pi x_1) \left((\pi^2 + 1)\exp(x_2) - \pi^2 x_2 \exp(1)\right),
\end{align}
\end{subequations}
and modified coupling conditions
\begin{subequations}\label{eq:mms_coupling}
\begin{align}
  \utangent\cdot\stressvar\cdot \uonormal + \betatangent \utangent\cdot\vels &= h^\Gamma_\tau,
  &&h^\Gamma_\tau := \left(\betatangent - \mu \right) \frac{1}{\pi} \exp(x_2) \sin(\pi x_1),\\
  \uonormal \cdot\stressvar\cdot \uonormal + {p}_D &= h^\Gamma_n,
  &&h^\Gamma_n :=  2(\mu - 1) \exp(1)\cos(\pi x_1),\\
  \vels\cdot \uonormal + \kovermu\nabla p_D\cdot \uonormal &= g^\Gamma,
  &&g^\Gamma := 0.
\end{align}
\end{subequations}
The functions $h^\Gamma_\tau$, $h^\Gamma_n$, and $g^\Gamma$ ensure
that the conditions are satisfied independent of the choice of parameters.
Note that the choice of data in \eqref{eq:mms_coupling} only modifies the
right-hand side while the problem operators remain unchanged.

This setting enables code verification in terms of grid convergence tests in all parameter settings.
For grid convergence tests, the errors for the finite element schemes are reported in $L^2$ and $H^1$
norms. Using $\vec{P}_2$-$P_1$-$P_2$ elements for \eqref{eq:ds_weak} quadratic
convergence in all the variables in their respective norms is expected.
Discretization by $\vec{C}\vec{R}_1$-$P_0$-$P_0$(-$P_0$) in all the formulations yields
a first order scheme.

The errors for the finite volume scheme are computed in the following discrete $L^2$ norm
\begin{equation}\label{eq:discrete-fvm-norm}
\lVert u \rVert_\text{FV} := \left( \sum\limits_{K \in {\Omega}_{h}} |K| u_{K}^2 \right)^{\frac{1}{2}}.
\end{equation}
It is well known that with the typical flux reconstruction schemes, based on a two-point
flux approximation on structured Cartesian grids, second order super-convergence at cell
centers (pressures) and face centers (Stokes velocity components)
is obtained \cite{Li2014,Droniou2017,Schneider2018}.

We report error convergence of the FEM schemes for all the formulations
in \cref{fig:fem_cvrg}. Expected (or faster) convergence is observed
in all cases. We remark that the observed quadratic convergence of the interfacial pressure
$p|_{\Gamma}$ in \eqref{eq:dsLM_weak} is likely due to the zero exact solution
in the manufactured setup. Error convergence for the FVM schemes is reported for
formulations \eqref{eq:dsLM_weak} and \eqref{eq:ds_robin_weak} in \cref{fig:fvm_cvrg}.
Quadratic convergence in the discrete norm \eqref{eq:discrete-fvm-norm} is observed for all the variables.
\begin{figure}
  \centering
  \includegraphics[height=0.29\textwidth]{./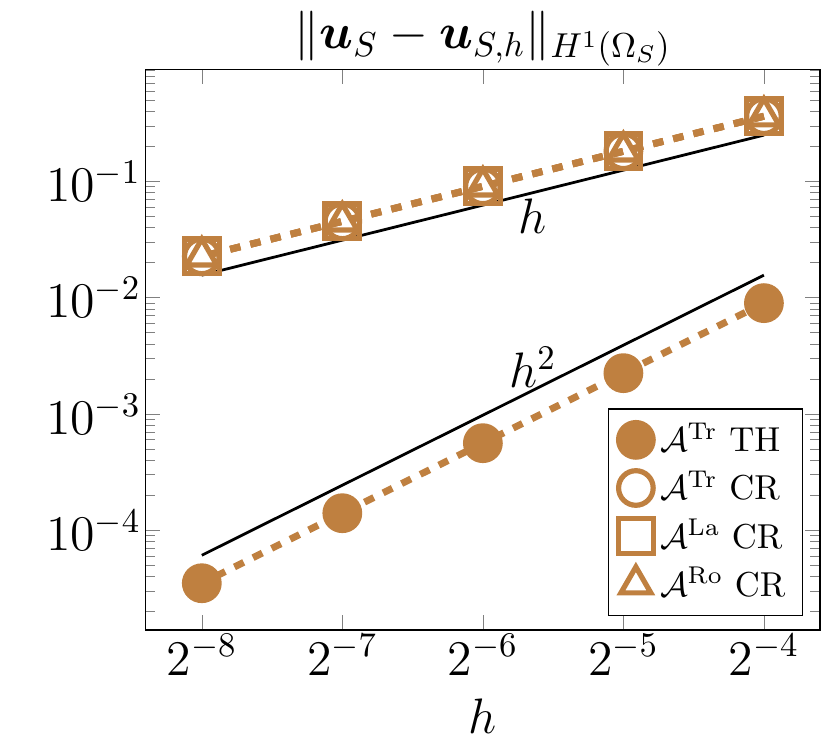}
  \includegraphics[height=0.29\textwidth]{./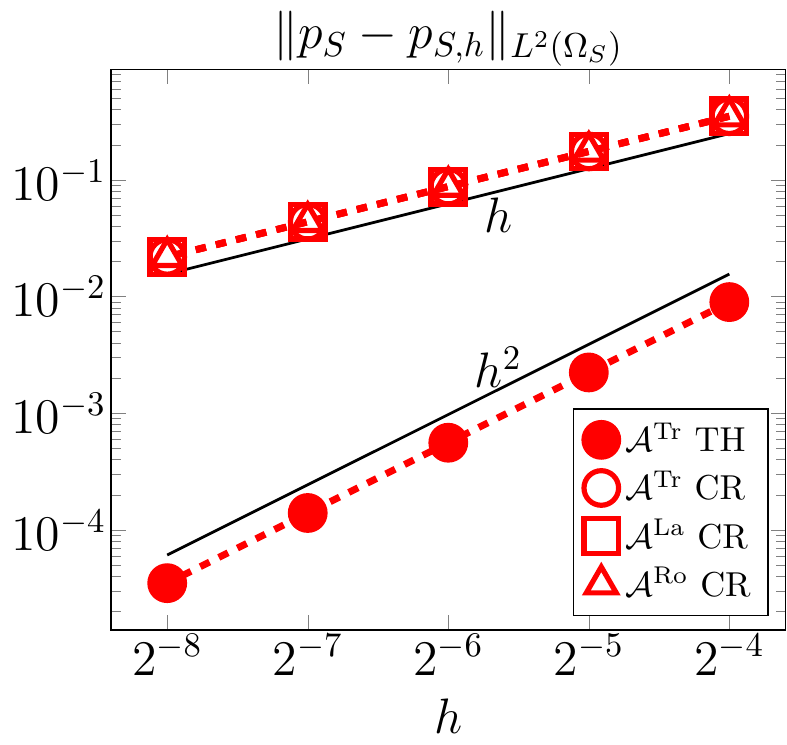}
  \includegraphics[height=0.29\textwidth]{./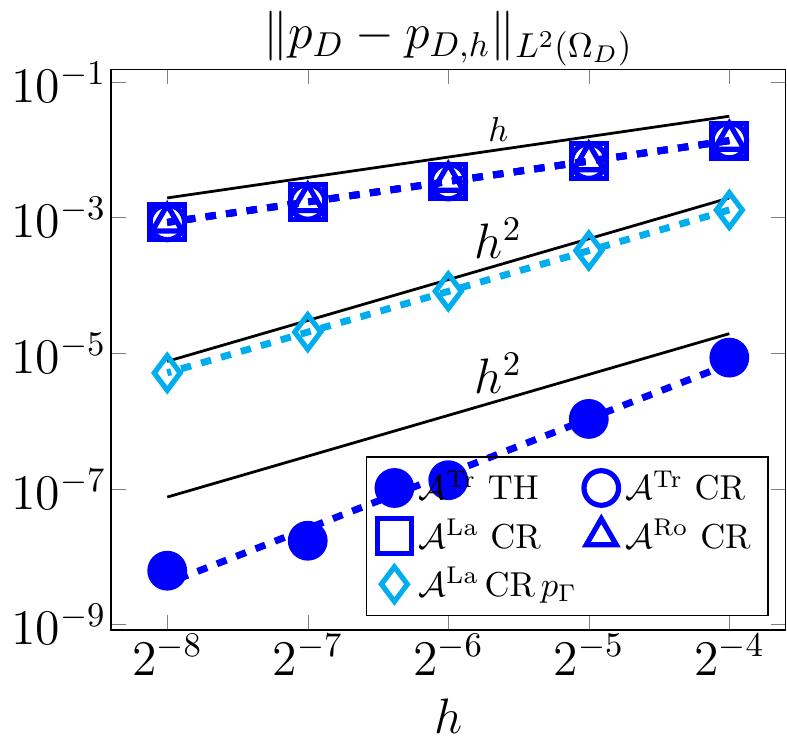}
  \vspace{-12pt}
  \caption{Approximation properties of FEM discretizations for the manufactured problem \eqref{eq:mms}
    with unit parameters using the Trace formulation \eqref{eq:ds_weak}, multiplier formulation \eqref{eq:dsLM_weak}
    and Robin formulation \eqref{eq:ds_robin_weak}. Only \eqref{eq:ds_weak}
    is considered with $\vec{P}_2$-$P_1$-$P_2$ while $\vec{C}\vec{R}_1$-$P_0$-$P_0$(-$P_0$)
    is used for all formulations. $L^2(\Gamma)$-error of interface pressure in \eqref{eq:dsLM_weak} is plotted in cyan color.
  }
  \label{fig:fem_cvrg}
\end{figure}
\begin{figure}
  \centering
  \includegraphics[height=0.29\textwidth]{./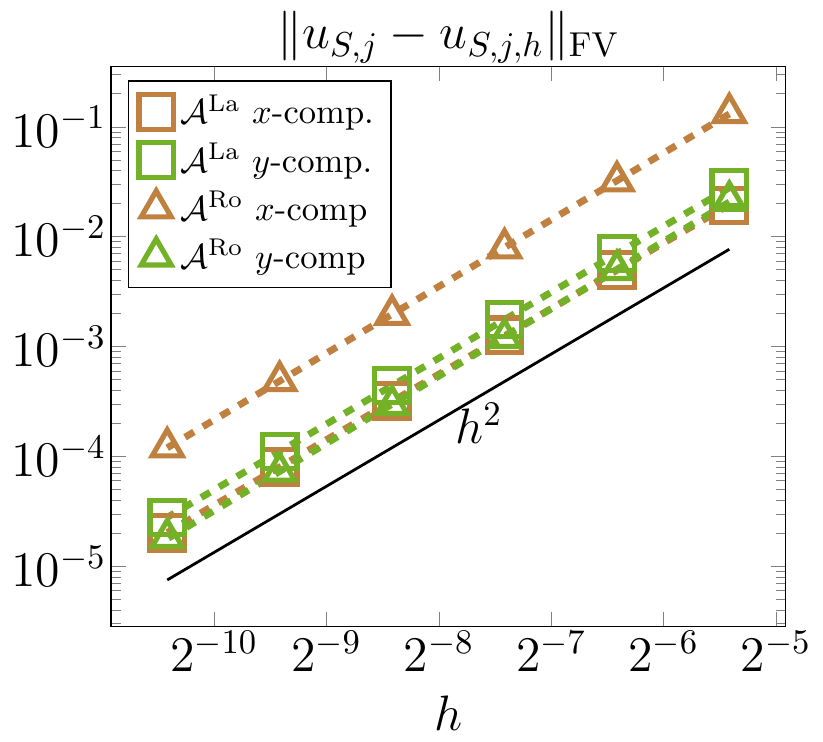}
  \includegraphics[height=0.29\textwidth]{./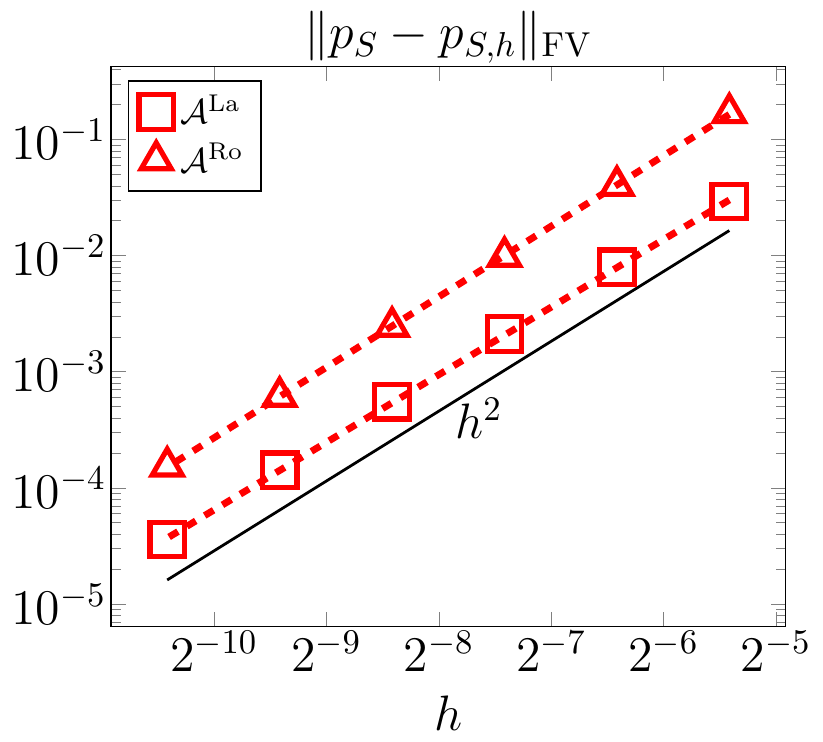}
  \includegraphics[height=0.29\textwidth]{./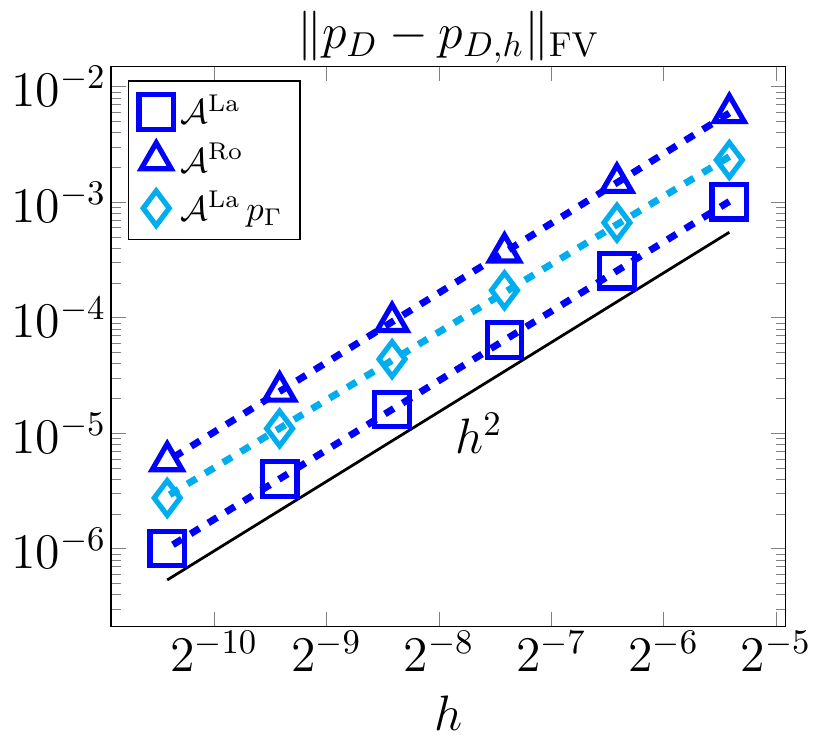}
  \vspace{-12pt}
  \caption{Approximation properties of FVM (Staggered-TPFA) discretization for the manufactured problem \eqref{eq:mms}
    with unit parameters using the Lagrange multiplier formulation \eqref{eq:dsLM_weak}
    and the Robin formulation \eqref{eq:ds_robin_weak}. $u_{S,x}$ and $u_{S,y}$ denote the components
    of $\vels$ (the degrees of freedom for the respective components have different locations and control volumes in the staggered FVM).
    For \eqref{eq:dsLM_weak} the error of interface pressure error measured in $L^2$-norm \eqref{eq:discrete-fvm-norm} on $\Gamma$
    is plotted in cyan.
    }
  \label{fig:fvm_cvrg}
\end{figure}


\section{Interface intersecting Dirichlet boundaries}\label{sec:meet_dirichlet}
The analysis of \cref{sec:abstract} and robustness study of \cref{sec:param_sweep} 
assume that the interface
$\Gamma$ intersects the Neumann boundaries of both the Darcy and the Stokes domain.
This fact was reflected by the $\| \cdot \|_{-\frac12, \Gamma}$ norm used in the analysis and
in the preconditioner construction through eigenvalue problem \eqref{eq:eigv}.
If instead, $\Gamma$ intersects the Dirichlet boundaries of the respective problems
\eqref{eq:daq_precond} no longer defines a parameter-robust preconditioner\footnote{
  Visual inspection of the spectrum in this case reveals that the number of eigenvalues
  unbounded in $\perm$ corresponds to the number of degrees of freedom of the
  intermediate trace space $V_h$ (see \cref{sec:discrete_precond}) associated
  with $\partial\Gamma_h$. Since this number is finite in a two-dimensional problem 
  the issue typically does not affect performance of iterative solvers. However,
  this is not the case for $d=3$ as then the number of unbounded modes increases
  with $\lvert \partial\Gamma_h \rvert$.
  } for formulation \eqref{eq:ds_weak}, see \cref{tab:eigs_daq_dir_no00}.

\begin{table}
  \centering
  \caption{
    Condition numbers of \eqref{eq:daq_precond}-preconditioned $\mathcal{A}^{\text{Tr}}$
    when $\Gamma$ intersects
    Dirichlet boundaries of both subproblems. Geometry of \cref{ex:naive}
    is used with $\mu=1$, $\alpha=1$. Parameter sensitivity is due to incorrect,
    namely, $\lVert \cdot\rVert_{-\frac12, \Gamma}$, control at the interface.
  }
  \label{tab:eigs_daq_dir_no00}
  \vspace{-10pt}  
  \footnotesize{
    \begin{tabular}{c|ccccc}
      \hline
      \backslashbox{$\perm$}{$h$} & $2^{-2}$ & $2^{-3}$ & $2^{-4}$ & $2^{-5}$ & $2^{-6}$\\
      \hline
      $1$	&7.37	&7.46	&7.47	&7.46	&7.45 \\
$10^{-1}$	&9.16	&9.26	&9.27	&9.26	&9.26 \\
$10^{-2}$	&18.21	&18.52	&18.58	&18.59	&18.58\\
$10^{-4}$	&30.59	&34.94	&37.84	&39.13	&39.51\\
      \hline
    \end{tabular}
  }
\end{table}

However, the theory of \cref{sec:abstract} and resulting preconditioners
can be extended to more general cases. In particular,
the fact that $\Gamma$ intersects with Dirichlet boundaries  
translates into a modification of the interface norm to be used
in the preconditioner, that is, the pressure on the interface shall be controlled in
$\| \cdot \|_{H^{-\frac12}_{00}(\Gamma)}$. We recall that $H^{\frac12}_{00}(\Gamma)$ is a subspace
of $H^{\frac12}(\partial\Omega_D)$ containing functions that vanish on $\partial\Omega_D\setminus\Gamma$,
see \cite{galvis2007non} for more details.
Hence, eigenvalue problem \eqref{eq:eigv} used in the construction of the discrete preconditioner
is replaced by $-\Delta_{\Gamma} u=\lambda u$ on $\Gamma$ \emph{and} $u=0$ on $\partial\Gamma$, i.e.
Dirichlet conditions are enforced.\footnote{As with Neumann boundaries in \eqref{eq:eigv}, the actual boundary data is irrelevant since it does not modify the operator.}
Finally, we note that above we have set $\mu=1$ for simplicity. In general case, the parameter scaling is analogous to \eqref{eq:eigv}.

Using $\vec{P}_2$-$P_1$-$P_2$ elements, \cref{fig:fem_eigs_TH} reports
spectral condition numbers of the Stokes-Darcy Trace formulation
\eqref{eq:ds_weak} with preconditioner \eqref{eq:daq_precond}. The
geometry is taken from \cref{ex:naive}, however for the Dirichlet case, the placement of Dirichlet
and Neumann boundaries is interchanged: Neumann boundaries on top and bottom edges; Dirichlet boundary conditions on the lateral edges which intersect with $\Gamma$. Parameter ranges from \cref{sec:params} are used. We observe stable condition
numbers $C$ in the range $7.3 \leq C \leq 18.5$ (interface meeting Dirichlet boundary)
and $6.2 \leq C \leq 16.5$ (interface meeting Neumann boundaries, i.e. the case analyzed
in \cref{sec:abstract} and numerically investigated in \cref{sec:numeric}).
\begin{figure}[]
  \centering
  \includegraphics[width=\textwidth]{./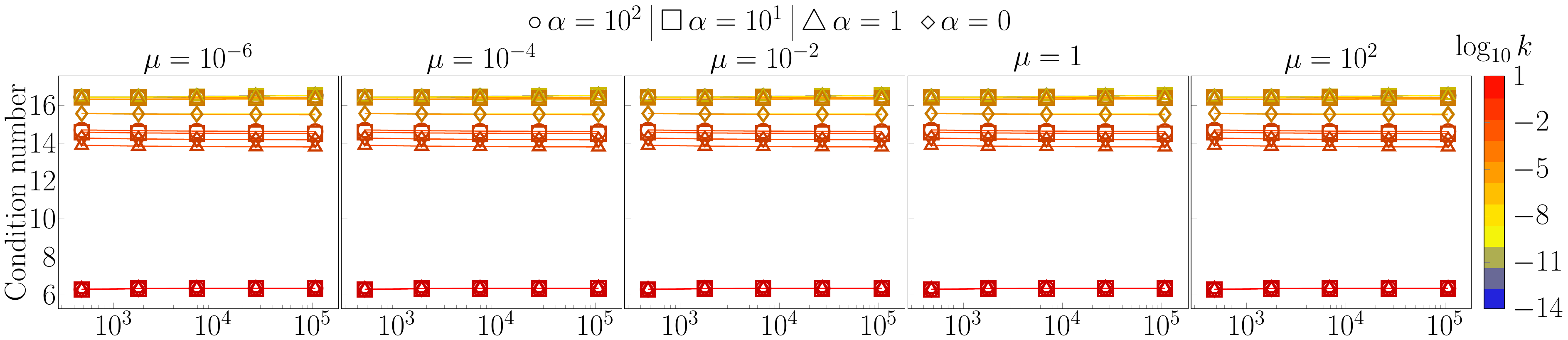}
  \includegraphics[width=\textwidth]{./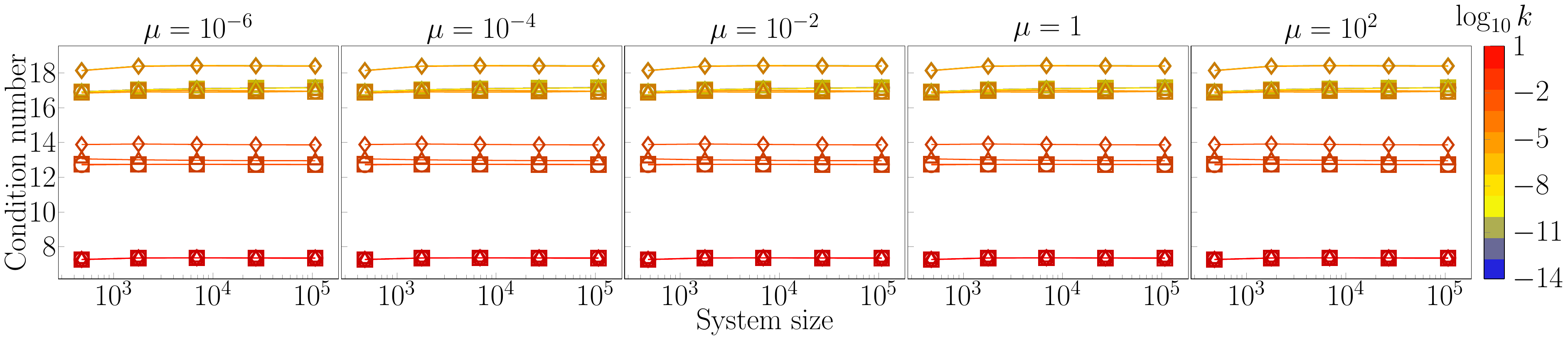}  
  \vspace{-25pt}
  \caption{
    Condition numbers for \eqref{eq:daq_precond}-preconditioned
    formulation \eqref{eq:ds_weak} across the parameter ranges from \cref{sec:params}, with
    interface $\Gamma$ intersecting Neumann boundaries (top) or Dirichlet
    boundaries (bottom). The case of Dirichlet boundaries requires modification of the
    preconditioner as described in \cref{sec:meet_dirichlet}. Problem \eqref{eq:ds_weak} is assembled on geometry
    defined in \cref{ex:naive} and discretized by $\vec{P}_2$-$P_1$-$P_2$ elements.
    }
  \label{fig:fem_eigs_TH}
\end{figure}


\section{FVM condition numbers for $\mathcal{B}^\text{La}$ and $\mathcal{B}^\text{Ro}$}\label{sec:fvm_cond_lm_and_robin}

We report in \cref{fig:fvm_cond_nolm} the condition numbers corresponding to the numerical tests of \cref{sec:param_sweep} and the MinRes iteration results reported in \cref{sec:la_ro_robustness}.
\begin{figure}
  \centering
  \includegraphics[width=\textwidth]{./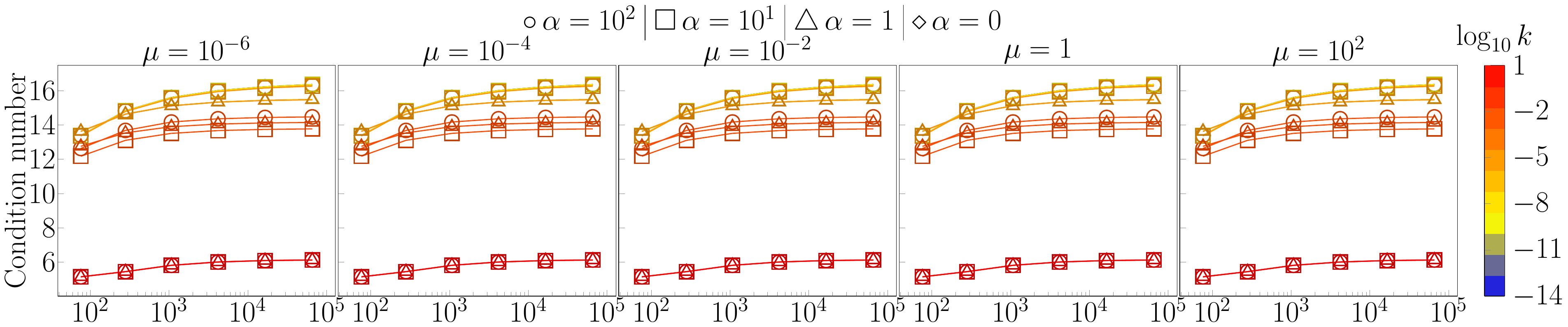}
  \includegraphics[width=\textwidth]{./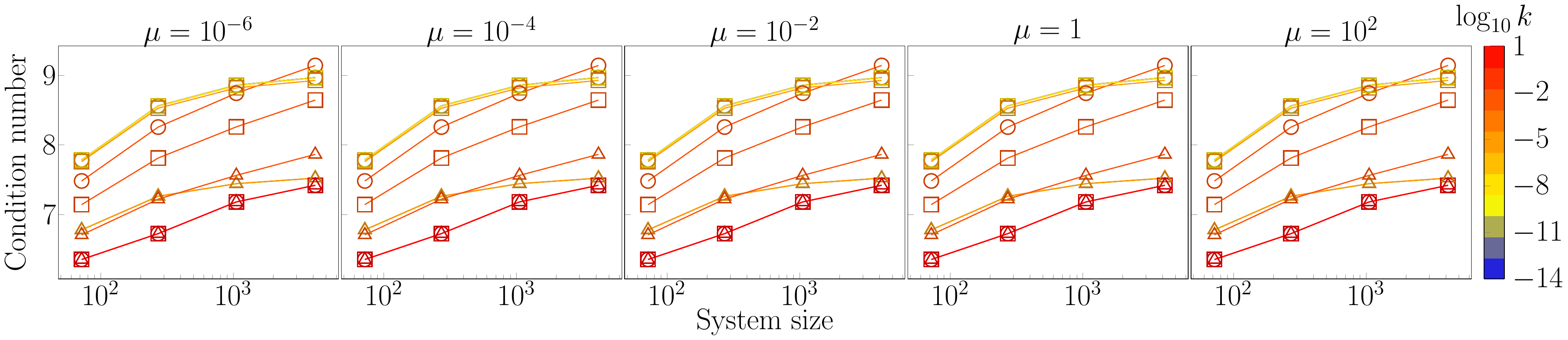}
  \vspace{-25pt}
  \caption{
    Condition numbers for the \eqref{eq:dsLM_precond}-preconditioned multiplier
    formulation \eqref{eq:dsLM_weak} and the \eqref{eq:ds_robin_precond}-preconditioned FVM
    formulation \eqref{eq:ds_robin_weak} (bottom) across the parameter ranges from \cref{sec:params}. Both formulations are discretized with FVM~(\cref{sec:app:fvm}). Not all $k$ are visible since the data overlaps with larger values of $k$.}
  \label{fig:fvm_cond_nolm}
\end{figure}

\end{document}